\def\NN {{\mathbb N}}     
\def\RR {{\mathbb R}}     
\def\ZZ {{\mathbb Z}}     
\def\ring#1{\ifmmode \mathaccent'027 #1\else \rm\accent'027 #1\fi}
\def\ul  {\underline}
\def\wt  {\widetilde}
\def\ker {\mathfrak{ker}}
\def\mc {\mathcal}
\def\mk {\mathfrak}
\def\Hom {\mathrm{Hom}}
\def\st {\mathrm{Stab}}
\def \bd {\begin{diagram}}
\def \ed {\end{diagram}}
\def\be  {\begin{eqnarray}}
\def\ee  {\end{eqnarray}}
\def\ben {\begin{eqnarray*}}
\def\een {\end{eqnarray*}}
\def\bpr {\begin{proof}[Proof]}
\def\epr {\end{proof}}
\def\bsp {\begin{split}}
\def\esp {\end{split}}
\def\bprr {\begin{proof}[solution]}
\def\bpru {\begin{proof}[hint]}
\def\bpro {\begin{proof}[answer]}
\def\bcd {\begin{CD}}
\def\ecd {\end{CD}}
\newcommand{\scal}[1]{\left\langle#1\right\rangle}
\newtheorem{theorem}{Theorem}[section]
\newtheorem{lemma}[theorem]{Lemma}
\newtheorem{prop}[theorem]{Proposition}
\newtheorem{coro}[theorem]{Corollary}
\newtheorem{remark}[theorem]{Remark}
\newtheorem{df}[theorem]{Definition}
\newtheorem{conj}[theorem]{Conjecture}
\newtheorem{quest}[theorem]{Question}
\begin{document}

\title[Non-semistable exceptional objects in hereditary categories: some remarks and ...]%
{Non-semistable exceptional objects in hereditary categories: some remarks and conjectures}
\author{George Dimitrov}
\address[Dimitrov]{Universität Wien\\
Oskar-Morgenstern-Platz 1, 1090 Wien\\
Österreich}
\email{george.dimitrov@univie.ac.at}

\author{Ludmil Katzarkov}
\address[Katzarkov]{Universität Wien\\
Oskar-Morgenstern-Platz 1, 1090 Wien\\
Österreich}
\email{lkatzarkov@gmail.com}

\begin{abstract}
In our previous paper we studied non-semistable exceptional objects in hereditary categories and introduced the notion of regularity preserving category, but we obtained quite a few examples of such categories. Certain conditions on the Ext-nontrivial couples (exceptional objects $X,Y\in \mathcal A$ with ${\rm Ext}^1(X,Y)\neq 0$ and ${\rm Ext}^1(Y,X)\neq 0$) were shown to imply regularity-preserving. This paper is a brief review of the previous paper (with emphasis on regularity preserving property) and we add some remarks and conjectures.

It is known that in Dynkin quivers ${\rm Hom}(\rho,\rho')=0$ or ${\rm Ext}^1(\rho,\rho')=0$ for any two exceptional representations. In the present paper we use this property to show that for any Dynkin quiver $Q$ there are no Ext-nontrivial couples in $Rep_k(Q)$, which implies regularity preserving of $Rep_k(Q)$, where $k$ is an algebraically closed field. We study this property in other quivers. In particular in any star quiver with three arms $Q$ for any two exceptional representations $\rho, \rho'$ we have ${\rm Hom}(\rho,\rho')=0$ or ${\rm Ext}^1(\rho,\rho')=0$ provided that $\rho$ or $\rho'$ is a thin representation. In the previous version we asserted  falsely that this holds for any two exceptional representations (without imposing the restriction that one of them is thin) for extended Dynkin quivers $\widetilde{\mathbb E}_6, \widetilde{\mathbb E}_7, \widetilde{\mathbb E}_8 $.  

 \end{abstract}

\maketitle
\setcounter{tocdepth}{2}
\tableofcontents

\section{Introduction}

   T. Bridgeland introduced in his seminal work \cite{Bridg1} the definition of a locally finite stability condition  on a triangulated category $\mc T$,
   motivated by  M. Douglas's notion of $\Pi$-stability. He
       proved that the set of these stability conditions is a complex manifold, denoted by $\st(\mc T)$, on which act the groups $\widetilde{GL}^+(2,\RR)$ and ${\rm Aut}(\mc T)$. To any bounded t-structure of $\mc T$ he assigned a set of stability conditions.

E. Macri showed in \cite{Macri}  that the extension closure of   a full Ext-exceptional collection $\mc E=(E_0,E_1,\dots, E_n)$ in  $\mc T$ is a bounded t-structure. 
The stability conditions obtained from this t-structure together with their  translations by the right action of $\widetilde{GL}^+(2,\RR)$ are referred to as \textit{generated by $\mc E$} \cite{Macri}.  
E. Macr\`i, studying   $ \st(D^b(K(l))$ in  \cite{Macri},     gave  an idea for producing an  exceptional pair generating a given stability condition  $\sigma$ on  $D^b(K(l))$, where $K(l)$ is the $l$-Kronecker quiver. 

 We defined in \cite{DK}  the notion of a \textit{$\sigma$-exceptional collection} (\cite[Definition 3.19]{DK}), so that the full $\sigma$-exceptional collections are exactly the exceptional collections which generate $\sigma$, and  we   focused  on  constructing $\sigma$-exceptional collections from a given $\sigma \in \st(D^b(\mc A))$, where $\mc A$ is a hereditary, $\hom$-finite, abelian category.  We 
 developed tools   for constructing $\sigma$-exceptional collections of length at least three in  $D^b(\mc A)$.  These tools are based on  
   the notion of       \textit{regularity-preserving hereditary category},  introduced in \cite{DK}.
	
 After a detailed study of  the exceptional objects of the quiver $Q_1=$ $\begin{diagram}[size=1em]
       &       &  \circ  &       &        \\
       & \ruTo &         & \luTo &        \\
\circ  & \rTo  &         &       &  \circ
\end{diagram}$   it was shown in \cite{DK} that $Rep_k(Q_1)$ is regularity preserving and  the newly obtained methods for constructing $\sigma$-triples were applied to the case   $\mc A = Rep_k(Q_1)$.   As a result we  obtained        the following theorem:

 \begin{theorem}[\cite{DK}] \label{main theorem for Q_1 in intro} Let $k$ be an algebraically closed field.
 For each $\sigma \in \st(D^b(Rep_k(Q_1)))$ there exists a full
$\sigma$-exceptional collection.
\end{theorem}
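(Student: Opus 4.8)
The plan is to reduce the rank-three problem to the rank-two case treated by Macr\`i \cite{Macri}, using the regularity-preserving property of $Rep_k(Q_1)$ established in \cite{DK}. Since $Q_1$ has three vertices, $K_0(D^b(Rep_k(Q_1)))\cong\ZZ^3$ and every full exceptional collection has length three; thus I must produce, for a given $\sigma=(Z,\mc P)\in\st(D^b(Rep_k(Q_1)))$, an exceptional triple $(E_0,E_1,E_2)$ that generates $\sigma$. By the definition of a $\sigma$-exceptional collection \cite[Definition 3.19]{DK} it suffices to find a full Ext-exceptional triple whose terms are $\sigma$-semistable and whose phases lie, in strictly decreasing order, inside a half-open interval of length one; the extension closure of such a triple is then a heart of a bounded t-structure generating $\sigma$ up to the $\widetilde{GL}^+(2,\RR)$-action.

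First I would locate a single $\sigma$-semistable exceptional object to peel off. Recall that the indecomposables of the tame quiver $Q_1$ (affine type $\wt{A}_2$) split into the preprojective and preinjective families, all of which are exceptional, together with the regular tubes. Using the classification of the exceptional objects of $Q_1$ and the regularity-preserving property from \cite{DK}, one controls the Harder--Narasimhan factors of those exceptional objects that fail to be $\sigma$-semistable: regularity-preserving guarantees that such factors do not obstruct the construction, so that among all exceptional objects there is one, say $E_0$, which is $\sigma$-semistable and whose phase is extremal (maximal, after possibly replacing $\sigma$ by a $\widetilde{GL}^+(2,\RR)$-translate). This $E_0$ is the candidate leftmost term of the desired collection.

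Next I would pass to the right perpendicular category $E_0^{\perp}$, an admissible triangulated subcategory of $D^b(Rep_k(Q_1))$ which, because the ambient category is hereditary, is equivalent to the derived category of a rank-two hereditary category, e.g.\ of Kronecker type $K(l)$. Restricting $\sigma$ to $E_0^{\perp}$ yields a stability condition $\sigma'$, and Macr\`i's analysis of $\st(D^b(K(l)))$ in \cite{Macri} furnishes a full $\sigma'$-exceptional pair $(E_1,E_2)$ inside $E_0^{\perp}$. Finally I would assemble $(E_0,E_1,E_2)$: fullness follows because $E_0$ together with the generators $E_1,E_2$ of $E_0^{\perp}$ generate the whole category, and the remaining task is phase-compatibility, i.e.\ showing that $\phi(E_0)$, $\phi(E_1)$, $\phi(E_2)$ are strictly decreasing and confined to an interval of length one.

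This last step is the main obstacle. Placing $E_0$ at the correct end forces $\phi(E_0)$ to dominate the phases coming from the perpendicular, but one must simultaneously keep all three phases within a window of width one; this can fail a priori when an exceptional object is not $\sigma$-semistable and its Harder--Narasimhan filtration involves regular (imaginary-root) objects from the tubes. It is precisely here that the regularity-preserving property of $Rep_k(Q_1)$ is decisive: it constrains the HN factors of the non-semistable exceptional objects so that the extremal choice of $E_0$ and the restricted pair $(E_1,E_2)$ can always be synchronised into a single length-one window, completing the construction.
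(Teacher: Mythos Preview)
Your approach is genuinely different from the one the paper attributes to \cite{DK}, and it has a real gap.

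\textbf{The gap.} The step ``restricting $\sigma$ to $E_0^{\perp}$ yields a stability condition $\sigma'$'' is not justified and in general is false. Given a semiorthogonal decomposition $\mc T=\langle E_0^{\perp}, E_0\rangle$, a stability condition on $\mc T$ does \emph{not} automatically restrict to one on the admissible piece $E_0^{\perp}$: the Harder--Narasimhan factors of an object of $E_0^{\perp}$ computed in $\mc T$ need not lie in $E_0^{\perp}$, so you do not get a slicing on the subcategory. This is precisely the difficulty that Collins--Polishchuk \cite{CP} address (under additional hypotheses you have not checked), and it is the reason one cannot simply ``peel off $E_0$ and quote Macr\`i''. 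Closely related is the earlier step: you assert the existence of a $\sigma$-semistable exceptional $E_0$ with extremal phase, but for affine quivers a priori all exceptional objects could be $\sigma$-unstable, with HN factors coming from the tubes; producing a semistable exceptional object is itself nontrivial and is part of what the machinery in \cite{DK} is built to do. Your final paragraph acknowledges the phase-window obstacle but does not resolve it; invoking ``regularity-preserving'' there is circular, since that notion controls HN factors of non-semistable exceptionals, not the restriction of $\sigma$ to a perpendicular.

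\textbf{What the paper actually does.} As summarised in Section~1.1 of the present paper, the proof in \cite{DK} does not pass to perpendicular categories. Instead it applies an algorithm $\mk{alg}$ to any non-semistable exceptional $R$ to produce a triangle $U\to R\to V\to U[1]$ with $V$ semistable; when $R$ is $\sigma$-regular this yields relations $R\DashedArrow(S,E)$ with $S$ semistable exceptional. Regularity-preserving of $Rep_k(Q_1)$ guarantees that the second component $E$, if not semistable, is again $\sigma$-regular, so one iterates and obtains sequences of the form \eqref{sequence of cases in intro}. These terminate (\cite[Lemma 7.1]{DK}) in a list $S_1,\dots,S_{n+1},E_{n+1}$ of semistable exceptional objects, and a detailed case analysis in \cite[Sections 7--9]{DK} (using the explicit tables of $\hom$ and $\hom^1$ between the exceptional objects of $Q_1$ recalled here in Section~\ref{two examples}) assembles a full $\sigma$-exceptional triple from this data. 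So the actual argument is an intrinsic construction of semistable exceptionals inside $\mc T$ followed by case-by-case verification, not a reduction-by-perpendicular to the Kronecker case.
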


 In other words,  all stability conditions on $D^b(Q_1)$ are generated by exceptional collections.  This theorem implies
 that $\st(D^b(Q_1))$ is connected \cite[Corollary 10.2]{DK}.   Theorem \ref{main theorem for Q_1 in intro} and the data about the exceptional collections  
 \cite[Section 2]{DK}  are a basis for proving that $\st(D^b(Rep_k(Q_1)))$ is contractible (done in a subsequent paper).

  One difficulty in the proof of  Theorem \ref{main theorem for Q_1 in intro}  is   due to the
     Ext-nontrivial   couples (Definition \ref{Ext-nontrivial couple}). 	In the present  paper we show that  this difficulty does not arise in $Rep_k(Q)$, where $Q$ is any Dynkin quiver, which motivates us for Conjecture \ref{conj1}.

\vspace{3mm}

\noindent 1.1. We recall  now  in  more detail how the notion regularity preserving category appeared    in \cite{DK}.  
By  $\mc A$ we denote  a $k$-linear  $\hom$-finite hereditary  abelian category, where  $k$ is an algebraically closed field, and we denote $D^b(\mc A)$  by  $\mc T$. We choose $\sigma \in \st(\mc T)$.

In   \cite[Sections 4]{DK} is given an algorithm $\mk{alg}$, which produces a     distinguished triangle $\mk{alg}(R)=\begin{diagram}[size=1.0em] U & \rTo      &    R  &  \rTo     & V  & \rTo     & U[1] \end{diagram}$ with   semi-stable $V$,    from  any unstable  exceptional    object $R \in \mc T$.  The triangle $\mk{alg}(R)$ satisfies the vanishings  $\hom^1(U,U)=\hom^1(V,V)=\hom^*(U,V)=0$   for any unstable exceptional object $R$, provided that the category $\mc A$ has no Ext-nontrivial couples.  When $\mc A$ contains Ext-nontrivial couples,  these vanishings are not guaranteed by $\mk{alg}$.  By definition we call a non-semistable exceptional object $R$ $\sigma$-regular, when these vanishings  hold for $\mk{alg}(R)$, otherwise we call it $\sigma$-irregular.  In particular, if  $\mc A$ has no Ext-nontrivial couples, then all non-semistable objects are $\sigma$-regular.  When $R$ is $\sigma$-regular,  the vanishings in $\mk{alg}(R)$ imply that  for any indecomposable components $S$ and $E$ of $V$  and $U$, respectively,    the  pair $(S,E)$ is exceptional with semistable  first element $S$.    We denote this relation between a $\sigma$-regular object  $R$ and the exceptional pair
   $(S,E)$ by $\bd  R & \rDotsto^{X} & (S,E)  \ed $, where $X$ contains further information (see \cite[Definition 5.2]{DK}).

The $\sigma$-regular objects in turn  are divided into final and non-final as follows.  In each relation  $\bd  R &
\rDotsto & (S,E)  \ed $ the first component $S$ is a semistable exceptional object, and the second is not restricted to be always semistable. If there is such a relation with a non-semistable $E$, then we refer to $R$ as  a \textit{nonfinal} $\sigma$-regular object, otherwise - \textit{final}.  The name nonfinal is justified, when the category $\mc A$ has a specific property called regularity-preserving, defined  as follows: 
 \begin{df} {\rm (}\cite[Definition 6.1]{DK}{\rm ) } \label{RP category} A hereditary abelian category $\mc A$ will be said to be\\ \ul{regularity-preserving}, if for each $\sigma \in \st(D^b(\mc A))$ from the
 the following  data:

 $R\in D^b(\mc A)$ is a $\sigma$-regular object; $ \begin{diagram} R & \rDotsto & (S,E) \end{diagram}$; $E \not \in \sigma^{ss}$ \vspace{2mm}

it follows that  $E$   is  a  $\sigma$-regular object as well.
 \end{df}

   In a regularity-preserving category $\mc A$ the relation  $\bd   & \rDotsto &   \ed $ circumvents the $\sigma$-irregular objects, and  each non-final
   $\sigma$-regular object $R$   generates  a long  sequence\footnote{By ``long'' we mean that it has at least two steps. This sequence  is not uniquely determined
   by $R$.} of the form:  \begin{gather} \label{sequence of cases in intro}  \bd[height=1.5em] R & \rDotsto^{X_1} & (S_1,E_1) & \rMapsto^{proj_2}& E_1 & \rDotsto^{X_2} & (S_2,E_2)  & \rMapsto^{proj_2}& E_2 & \rDotsto^{X_3} & (S_3,E_3)& \rMapsto^{proj_2}&  \dots \\
  &   & \dMapsto^{proj_1} &    &  &    & \dMapsto^{proj_1} &  &  &  & \dMapsto^{proj_1} &  &  & \\
   &  &  S_1              &    &  &    &  S_2              &  &  &  & S_3               &  &  &  \ed. \end{gather}    Furthermore,  after finitely many steps (say $n$) a final $\sigma$-regular object  $E_n$ appears (\cite[Lemma 7.1]{DK})  and then $S_1, S_2$, $\dots$, $S_{n+1},E_{n+1}$($n\geq 1$)  is a sequence of semistable and exceptional objects. The last pair $(S_{n+1},E_{n+1})$ is always exceptional, however  the entire sequence is not always an exceptional collection. 
  The sequences of the form \eqref{sequence of cases in intro} generated by   $\sigma$-regular objects are the main tool  used in \cite[Sections 7,8,9]{DK}  for constructing $\sigma$-exceptional collections.  \vspace{5mm}

 1.2.  We explain now the  known examples of regularity preserving categories.  Apart from its expository aspect,   this paper  enlarges the small list of examples of such categories. 

 In \cite[Section 6]{DK} we  found certain conditions on  the Ext-nontrivial couples of $\mc A$,  called
   \textit{RP property 1 and RP property 2} (see Definition \ref{def of ENC}  below),   which imply regularity-preserving. In particular, non-existing of such couples implies this property. Thus, regularity preserving  property is related to  specific pairwise relations between the exceptional objects of $\mc A$.  
	
	The study of exceptional objects in quivers goes back to \cite{S1}, \cite{S2}, \cite{WCB1}, and to \cite{Ringel0} for more general hereditary categories.   However, to the best of our knowledge, no attention to the Ext-nontrivial couples has been focused. 
		
	It follows from   \cite[Lemma 4.1]{Macri} that there are no
 Ext-nontrivial couples  in $Rep_k(K(l))$ and hence  $Rep_k(K(l))$ is an example of regularity preserving category (see \cite[Appendix B]{DK}).    

 Two more examples  are  the categories  $Rep_k(Q_1)$ and $Rep_k(Q_2)$ (the quiver $Q_2$ is in Fig. \eqref{Q1} in Section 2).  In \cite{DK} is shown that  there are Ext-nontrivial couples in  $Rep_k(Q_1)$ and $Rep_k(Q_2)$, but after computing  the   dimensions of $\Hom(X,Y)$, ${\rm Ext}^1(X,Y)$ for any two exceptional objects $X,Y$ and  by a careful analysis of the results  we showed that 
      RP properties 1 and  2 do hold in both the cases. Furthermore,  the resulting tables of dimensions show  that one  of the  spaces  $\Hom(X,Y)$, ${\rm Ext}^1(X,Y)$   always vanishes.

\vspace{3mm}

1.3  The  new (and easy) examples of regular preserving categories given in the present paper are the categories of representations of  Dynkin quivers (Corollary \ref{Dynkin are regularity preserv}).

 The basic observation  is   that if a quiver $Q$ satisfies    $\Hom_Q(\rho,\rho')=0$ or ${\rm Ext}_Q^1(\rho,\rho')=0$ for any two exceptional representations $\rho,\rho'$, then the dimension vectors of any Ext-nontrivial couple $\{\rho,\rho'\}$  satisfy $\langle \ul{\dim}(\rho) + \ul{\dim}(\rho'),\ul{\dim}(\rho) + \ul{\dim}(\rho') \rangle \leq 0$ (Lemma \ref{lemma for leq 0 and ENC}). This motivates us to study in more  detail the property that  $\Hom(\rho,\rho')=0$ or ${\rm Ext}^1(\rho,\rho')=0$ for   given exceptional representations $\rho,\rho'$ $\in Rep_k(Q)$. In  Section \ref{two examples} we recall some results from \cite{DK} about the exceptional objects of  $Rep_k(Q_1)$, $Rep_k(Q_2)$, in particular  this property holds in $Rep_k(Q_1)$ and  $Rep_k(Q_2)$ for any two exceptional representations (Corollary \ref{RP property 1,2 and.. for Q1} (b)).  An example  with an acyclic quiver where this fails  is obtained  by changing the orientation of the quiver $Q_2$ (see \eqref{non maximal rank}).

In Section \ref{the linear map} is recalled  the definition of the standard differential in the 2-term complex computing $\RR \Hom_Q(\rho,\rho')$  for any two representations   $\rho$, $\rho' \in Rep_k(Q)$, which we denote by  $F_{\rho,\rho'}^Q$. We utilize  this linear map  because the condition that  one of the two spaces  $\Hom(\rho,\rho')$ or ${\rm Ext}^1(\rho,\rho')$ vanishes is the same as the condition that  $F_{\rho,\rho'}^Q$ has maximal rank.   In Sections \ref{without loops}, \ref{star shaped}  we find   conditions which ensure maximality of the rank of  $F_{\rho,\rho'}^Q$.  The  strategy  is to expand  the simple linear-algebraic observations: Lemma  \ref{big lemma} (a),(b) and Lemma \ref{simple lemma} to big enough quivers by using   Corollary \ref{main coro}. \footnote{Corollary \ref{main coro} is based on the   algebro-geometric  fact (see e.g.  \cite[p. 13]{WCB2}) that  the orbit  $\mc O_{\rho}$ of an exceptional representation $\rho$  is Zariski open in a  certain affine space.}    The obtained conditions, which ensure maximality of the rank,  are as follows.

Let $\rho$, $\rho'$ be exceptional representations, $\alpha$, $\alpha'$ be their dimension vectors and let $A$, $A'$ be the supports of $\alpha$, $\alpha'$. 
When $Q$ has no edges loops and  $\alpha$ or $\alpha'$ has only one nontrivial value, i. e. $A$ or $A'$ is a single element set,   then $F_{\rho, \rho'}^Q$ has maximal rank (Lemma \ref{lemma with one simple representation}).

In Section \ref{without loops} we consider quivers without loops and exceptional representations whose dimension vectors are thin, i. e. the components of these vectors take values in $\{0,1\}$ (see Definition \ref{thin}).  The main result of this section (Lemma \ref{lemma with units}) is that, when the graph of $Q$ has no  loops,  for any two thin exceptional representations  $\rho$, $\rho'$  the linear map $F_{\rho,\rho'}^Q$  has maximal rank.  The last Lemma \ref{about A cap A' single element} of this section  considers some  cases in which $A\cap A'$ is a single element set
 and $\rho$, $\rho'$ are not restricted to be with thin  dimension vectors.

In Section \ref{star shaped} we restrict $Q$ further. We consider star shaped quivers with any orientation of the arrows (see Figure \ref{star shaped quiver}).  We allow here the exceptional representations to have hill dimension vector (Definition \ref{hill}) in addition to thin dimension vectors. It is  shown that  for any two exceptional representations $\rho, \rho' \in Rep_k(Q)$, s.t. one of them is thin and the dimension vector of the other is  hill or thin,  the map $F_{\rho,\rho'}^Q$ has maximal rank (Proposition  \ref{main coro sect 5}). It follows that in a star quiver with three arms  $Q$  for any two exceptional representations $\rho, \rho'$ we  have  ${\rm Hom}(\rho,\rho')=0$ or ${\rm Ext}^1(\rho,\rho')=0$ provided that $\rho$ or $\rho'$ is a thin representation, since in this case from \cite{Ringelv3} it follows that all exceptional representations are  thin or hill.

	 Star-shaped quivers   have been extensively  studied (going back to \cite{GP}, \cite{BGP} and recently e.g. \cite{AH}), but  to the best of our knowledge  Proposition  \ref{main coro sect 5} is new. 
	
	 Examples where   $\hom(\rho,\rho')\neq 0$ and  $\hom^1(\rho,\rho') \neq 0$ with exceptional representations $\rho,\rho'$ both having  hill dimension vectors in quiver with graph the extended Dynkin diagram $\wt{\mathbb E}_6$ were pointed to us by Claus Michael  Ringel.  Using \cite{DR} one can 
	 determine all the pairs $(\rho,\rho')$ of exceptional representations  such that $\hom(\rho,\rho')$ and  $\hom^1(\rho,\rho')$ are  both non-zero for extended Dynkin  quivers. 
			
From here till the end of the introduction $Q$ is a  Dynkin quiver $Q$ (i. e. the graph of $Q$ is $A_n$ with $n\geq 1$ or $D_n$ with $n\geq 4$  or $E_n$ with $n=6,7,8$).	Lemma \ref{lemma for leq 0 and ENC} combined with the positive answer of the question in Section \ref{question},	 and the positivity of the Euler form  imply  that there are no Ext-nontrivial couples in $Rep_k(Q)$ (Corollary \ref{noENCdynkin}). This in turn implies that $Rep_k(Q)$ is regularity preserving. Furthermore,  there are no $\sigma$-irregular objects for any $\sigma \in \st(D^b(Rep_k(Q)))$. 

Corollary \ref{noENCdynkin} means that for any two exceptional representations $\rho,\rho' \in Rep_k(Q)$ we have ${\rm Ext}^1(\rho, \rho')=0$ or ${\rm Ext}^1(\rho',\rho)=0$. Analogous property in  degree zero, which says that $\Hom(\rho,\rho')=0$ or $\Hom(\rho',\rho)=0$ for any two  non-equivalent  exceptional representations $\rho,\rho' \in Rep_k(Q)$, is well known for Dynkin quivers.  
 These  facts  about any  Dynkin quiver $Q$  can be summarized   by saying that  for any two   non-equivalent   exceptional representations  $\rho,\rho' \in Rep_k(Q)$  the product of  the two  numbers in each row and in each column  of   the  table below vanishes (see \textbf{\textit{Some notations}} for the notations $\hom(\rho,\rho'), \hom^1(\rho,\rho')$):
\begin{gather} \begin{array}{| c | c |}
  \hline 
	\hom(\rho,\rho') & \hom^1(\rho,\rho') \\ 
	  \hline 
	\hom(\rho',\rho) & \hom^1(\rho',\rho) \\ 
	  \hline 	
	\end{array}. \nonumber
\end{gather}

In the final Section \ref{directions for future} we  pose  some conjectures and questions.

\vspace{3mm}

\vspace{3mm}

 \textit{\textbf{Some notations.}} In these notes $k$ is an algebraically closed field. The letter ${\mathcal T}$  denotes   a triangulated category, linear over  $k$, the shift functor  in ${\mathcal T}$ is designated by $[1]$.   We write $\Hom^i(X,Y)$ for  $\Hom(X,Y[i])$ and  $\hom^i(X,Y)$ for  $\dim_k(\Hom(X,Y[i]))$, where $X,Y\in \mc T$. 

  An \textit{exceptional object}  is an object $E\in \mc T$ satisfying $\Hom^i(E,E)=0$ for $i\neq 0$ and  $\Hom(E,E)=k $. We denote by ${\mc A}_{exc}$, resp. $D^b(\mc A)_{exc}$,  the set of all
    exceptional objects of  $\mc A$, resp. of  $D^b(\mc A)$.

An \textit{exceptional collection} is a sequence $\mc E = (E_1,E_2,\dots,E_n)\subset \mc T_{exc}$ satisfying $\hom^*(E_i,E_j)=0$ for $i>j$.    If  in addition we have $\langle \mc E \rangle = \mc T$, then $\mc E$ will be called a full exceptional collection.

 An abelian category $\mc A$ is said to be hereditary, if ${\rm Ext}^i(X,Y)=0$ for any  $X,Y \in \mc A$ and $i\geq 2$,  it is said to be of finite length, if it is Artinian and Noterian.

\vspace{3mm}

\textit{{\bf Acknowledgements:}}
We are deeply grateful to Claus Michael  Ringel for his invaluable help by pointing to us a counterexample of a false statement in the previous version, Corollary 6.3. in 1405.2943v2.  

The authors wish to express their gratitude to    Maxim Kontsevich  and Tony Pantev   for their   interest in this paper. 
The authors wish to express also  their gratitude to   Pranav Pandit   for his  interest in the paper, and for pointing us the important  references \cite{Ringel}, \cite{BGP}, \cite{DR} and for other important comments.

The authors were funded by NSF DMS 0854977 FRG, NSF DMS 0600800, NSF DMS 0652633
FRG, NSF DMS 0854977, NSF DMS 0901330, FWF P 24572 N25, by FWF P20778 and by an
ERC Grant.

\section{The two examples  in \texorpdfstring{\cite{DK}}{\space} of  regularity preserving  categories with Ext-nontrivial couples} \label{two examples}
Here we recall some results and definitions of \cite[Section 2]{DK}, which are related to the content  of  the present paper. 
First we recall two  definitions

\begin{df} \label{Ext-nontrivial couple} An \uline{Ext-nontrivial couple} in a hereditary abelian category $\mc A$  is a couple of exceptional objects $\{ L,\Gamma \} \subset {\mc A}_{exc}$, s. t.
$\hom^1(L,\Gamma)\neq 0$ and $\hom^1(\Gamma,L)\neq 0$.

\end{df}

\begin{df} \label{def of ENC}
Let $\mc A$ be a hereditary category. We say that $\mc A$ has

   \textbf{RP Property 1}: if  for    each    Ext-nontrivial couple  $\{ \Gamma, \Gamma' \}\subset \mc A$  and    for each   $X \in {\mc A}_{exc}$ \\   from $\hom^*(\Gamma,X)=0 $ it follows $\hom^*(X,\Gamma')= 0$;

  \textbf{RP Property 2}: if for    each    Ext-nontrivial couple  $\{ \Gamma, \Gamma' \}\subset \mc A$  and for any two  $X,Y\in {\mc A}_{exc}$ \\
from  $ \hom(\Gamma,X)\neq 0, \hom(X,Y)\neq 0, \hom^*(\Gamma,Y) =
0$ it follows $ \hom(\Gamma',Y)\neq 0$.

\end{df}
 For any finite quiver $Q$ we denote   the category of $k$-representations
of $Q$ by $Rep_{k}(Q)$. This   category is a $\hom$-finite hereditary $k$-linear abelian (see e. g. \cite{WCB2}).
 In this section are discussed  the exceptional objects  and their pairwise relations  in $Rep_k(Q_1)$, $Rep_k(Q_2)$,  where:
\be \label{two quivers} \label{Q1} Q_1= \begin{diagram}[1em]
   &       &  \circ  &       &    \\
   & \ruTo &    & \luTo &       \\
\circ  & \rTo  &    &       &  \circ
\end{diagram}  \ \qquad \qquad \ Q_2= \begin{diagram}[1em]
 \circ &  \rTo  &  \circ    \\
  \uTo &        & \uTo     \\
\circ   & \rTo  &    \circ
\end{diagram}. \ee

Recall (see page 8 in \cite{WCB2}) that for  any quiver $Q$ and any  $\rho,\rho'\in Rep_k(Q)$ we have the formula
\begin{gather}\label{euler} \hom(\rho,\rho')-\hom^1(\rho,\rho')=\scal{\ul{\dim}(\rho),\ul{\dim}(\rho')},  \end{gather}
where  $\scal{,}$ is the Euler form of $Q$. In particular, it follows  that if $\rho \in Rep_k(Q)$ is an exceptional object, then $\scal{\ul{\dim}(\rho),\ul{\dim}(\rho)}=1$. The vectors satisfying this equality are called real roots(see \cite[p. 17]{WCB2}).   The real roots of $Q_1$ are $(m+1,m,m)$,$(m,m+1,m+1)$, $(m,m,m+1)$, $(m
 +1,m+1,m)$, $(m+1,m,m+1)$, $(m,m+1,m)$, $m\geq 0$. The imaginary roots\footnote{Imaginary root is a  vector $\rho$ with $\scal{\ul{dim}(\rho),\ul{dim}(\rho)}\leq 0$.}  of $Q_1$,  are $(m,m,m)$, $m\geq 1$. Not every real root is a dimension vector of an exceptional representation(see \cite[Lemma 2.1]{DK}).

Propositions \ref{exceptional objects in Q1} and
\ref{exceptional objects in Q2}   classify  the exceptional objects on $Rep_k(Q_1)$,
$Rep_k(Q_2)$. 
\begin{prop}[\cite{DK}] \label{exceptional objects in Q1} The exceptional objects up to isomorphism in  $ Rep_{k}(Q_1) $ are ($m=0,1,2,\dots$)
\begin{gather} E_1^m = \begin{diagram}[1em]
   &       &  k^m &       &    \\
   & \ruTo^{\pi_+^m} &    & \luTo^{Id} &       \\
k^{m+1}  & \rTo^{\pi_-^m}  &    &       &  k^m
\end{diagram} \ \ \ \  E_2^m = \begin{diagram}[1em]
   &       &  k^{m+1} &       &    \\
   & \ruTo^{j_+^m} &    & \luTo^{Id} &       \\
k^{m}  & \rTo^{j_-^m}  &    &       &  k^{m+1}
\end{diagram} \ \ \ \  E_3^m = \begin{diagram}[1em]
   &       &  k^{m+1} &       &    \\
   & \ruTo^{j_+^m} &    & \luTo^{j_-^m} &       \\
k^{m}  & \rTo^{Id}  &    &       &  k^{m}
\end{diagram} \nonumber  \\
E_4^m = \begin{diagram}[1em]
   &       &  k^m &       &    \\
   & \ruTo^{\pi_+^m} &    & \luTo^{\pi_-^m} &       \\
k^{m+1}  & \rTo^{Id}  &    &       &  k^{m+1}
\end{diagram} \ \ \ \ M = \begin{diagram}[1em]
   &       &  0 &       &    \\
   & \ruTo &    & \luTo &       \\
0  & \rTo  &    &       &  k
\end{diagram} \ \ \ \  M'= \begin{diagram}[1em]
   &       &  k &       &    \\
   & \ruTo^{Id} &   &  \luTo  &       \\
k  &  \rTo &    &       &  0
\end{diagram},\nonumber \end{gather}
where 
\begin{gather} 
\nonumber  \pi_+^m(a_1,a_2,\dots, a_m, a_{m+1}) =(a_1,a_2,\dots, a_m) \qquad   \pi_-^m(a_1,a_2,\dots, a_m, a_{m+1})=(a_2,\dots, a_m, a_{m+1}) \\
\nonumber  j_+^m(a_1,a_2,\dots, a_m) =(a_1,a_2,\dots, a_m,0)  \qquad
  j_-^m(a_1,a_2,\dots, a_m)=(0,a_1,\dots, a_m).
\end{gather}
\end{prop}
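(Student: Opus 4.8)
The plan is to reduce the classification to a question about dimension vectors and then sort the real roots of $Q_1$ by means of the tame representation theory of the category. First I would record the reduction that an object $E \in Rep_k(Q_1)$ is exceptional exactly when it is a brick ($\mathrm{End}(E)=k$) whose dimension vector is a real root: a brick is indecomposable, and since $\mc A=Rep_k(Q_1)$ is hereditary the Euler formula \eqref{euler} gives $\hom(E,E)-\hom^1(E,E)=\scal{\ul{\dim}(E),\ul{\dim}(E)}$, so once $E$ is a brick and $\ul{\dim}(E)$ is a real root the right-hand side equals $1$ and $\hom^1(E,E)=0$ comes for free. Since the real roots are already listed (the six families), the proposition reduces to deciding which of these six families are realized by a brick, to showing that a realized root carries a unique brick up to isomorphism, and to producing an explicit model.

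For existence and the explicit form I would write down the six displayed representations and verify $\mathrm{End}=k$ directly. An endomorphism of $E_i^m$ is a triple $(\varphi_1,\varphi_2,\varphi_3)$ of matrices intertwining the three structure maps; because these maps are the shift projections $\pi^m_\pm$, the inclusions $j^m_\pm$ and identities, the intertwining relations rigidify each $\varphi_j$ into a single scalar. This is a short matrix computation carried out one family at a time, and together with the reduction above it certifies that each listed object is exceptional with the claimed dimension vector.

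The real content is completeness. Here I would use that the underlying graph of $Q_1$ is the $3$-cycle, so $Q_1$ is an acyclic orientation of the Euclidean quiver $\wt{\mathbb A}_2$, and invoke the preprojective/regular/preinjective trichotomy. The cleanest separator is the defect $\partial(\alpha)=\scal{\delta,\alpha}$ with $\delta=(1,1,1)$ the minimal imaginary root: a one-line bracket computation gives $\partial=+1$ on $(m+1,m,m)$ and $(m+1,m,m+1)$, $\partial=-1$ on $(m,m+1,m+1)$ and $(m,m+1,m)$, and $\partial=0$ on $(m,m,m+1)$ and $(m+1,m+1,m)$. The four families of nonzero defect are precisely the dimension vectors of the (pre)projective and (pre)injective indecomposables, each realized by a unique such module (one pins the base cases $E^0_3=P_2$, $E^0_2=P_3$, $E^0_1=I_1$, $E^0_4=I_3$ and propagates along the families by the Coxeter/Auslander--Reiten translate); these modules are directing, hence bricks without self-extension, so they are exactly $E_1^m,\dots,E_4^m$ and nothing else of nonzero defect occurs.

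The hard part will be the defect-$0$ (regular) real roots $(m,m,m+1)$ and $(m+1,m+1,m)$. For this orientation the regular part contains a single non-homogeneous tube, of rank $2$, whose two mouth modules have dimension vectors $(0,0,1)$ and $(1,1,0)$, summing to $\delta$; these are $M$ and $M'$, and having quasi-length $1<2$ they are exceptional, while the homogeneous tubes account only for the imaginary roots $(m,m,m)$. Every remaining regular real root $(m,m,m+1)$ or $(m+1,m+1,m)$ with $m\geq 1$ sits in this tube at quasi-length $2m+1\geq 3$, hence carries a self-extension and is not exceptional. This tube analysis is the step that genuinely uses the tame theory rather than the Euler form, and I expect it to be the main obstacle; a more elementary but laborious substitute is to show by hand that any representation of such a dimension vector either decomposes or admits a non-scalar endomorphism, exploiting that the structure maps at the heavier vertex cannot all have full rank. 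Uniqueness for each realized root then follows from the standard fact that a real Schur root determines its exceptional representation up to isomorphism, which completes the classification.
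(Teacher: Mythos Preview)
Your approach is sound, but note that there is nothing in this paper to compare it against: Proposition~\ref{exceptional objects in Q1} is quoted from \cite{DK} (see the attribution in its header), and the present paper only recalls the statement, the list of real roots, and the remark (citing \cite[Lemma 2.1]{DK}) that not every real root supports an exceptional representation. No argument is reproduced here.

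On its own merits your proposal is correct and is the standard route. The reduction ``exceptional $\Leftrightarrow$ brick on a real root'' is valid in a hereditary category; your defect computation $\partial(\alpha)=\langle\delta,\alpha\rangle=\alpha_1-\alpha_2$ (with vertex $1$ the source and $2$ the sink) correctly separates the four nonzero-defect families $(m{+}1,m,m)$, $(m,m{+}1,m{+}1)$, $(m{+}1,m,m{+}1)$, $(m,m{+}1,m)$ as preprojective/preinjective, and these are directing hence exceptional. For this orientation of $\wt{\mathbb A}_2$ one has $(p,q)=(1,2)$, so there is indeed a single non-homogeneous tube of rank $2$; its mouth modules have dimension vectors $(0,0,1)$ and $(1,1,0)$, i.e.\ $M$ and $M'$, and every other indecomposable in that tube has quasi-length $\geq 2$, hence either has a self-extension (quasi-length $=2$, dimension $\delta$) or fails to be a brick (quasi-length $\geq 3$). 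This disposes of the defect-zero real roots $(m,m,m{+}1)$ and $(m{+}1,m{+}1,m)$ for $m\geq 1$, while the homogeneous tubes contribute only imaginary roots. Your base-case identifications $E_3^0=P_2$, $E_2^0=P_3$, $E_1^0=I_1$, $E_4^0=I_3$ also check out. For the uniqueness clause you may either run it through the AR-component/tube structure as you suggest, or invoke once the fact that an exceptional representation has a Zariski-dense orbit in $Rep(\alpha)$ and is therefore the unique indecomposable of its dimension vector; the latter covers all six families uniformly.
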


\begin{prop}[\cite{DK}] \label{exceptional objects in Q2} The exceptional objects up to isomorphism in  $ Rep_{k}(Q_2) $ are($m=0,1,2,\dots$)
\begin{gather} E_1^m = \begin{diagram}[1em]
 k^m &  \rTo^{Id}  &  k^m   \\
  \uTo^{\pi_+^m} &        & \uTo^{Id}     \\
k^{m+1}  & \rTo^{\pi_-^m}  &    k^m
\end{diagram} \ \ \ \  E_2^m = \begin{diagram}[1em]
 k^{m+1} &  \rTo^{Id}  &  k^{m+1}   \\
  \uTo^{j_+^m} &        & \uTo^{Id}     \\
k^{m}  & \rTo^{j_-^m}   &    k^{m+1}
\end{diagram} \ \ \ \  E_3^m = \begin{diagram}[1em]
 k^{m} &  \rTo^{j_+^m}  &  k^{m+1}   \\
  \uTo^{Id} &        & \uTo^{j_-^m}     \\
k^{m}  & \rTo^{Id}   &    k^{m}
\end{diagram}\ \ \ \  E_4^m = \begin{diagram}[1em]
 k^{m+1} &  \rTo^{\pi_+^m}  &  k^{m}   \\
  \uTo^{Id} &        & \uTo^{\pi_-^m}     \\
k^{m+1}  & \rTo^{Id}   &    k^{m+1}
\end{diagram}\nonumber  \\
E_5^m = \begin{diagram}[1em]
 k^m &  \rTo^{j_+^m}  &  k^{m+1}   \\
  \uTo^{Id} &        & \uTo^{Id}     \\
k^{m}  & \rTo^{j_-^m}  &    k^{m+1}
\end{diagram} \ \ \ \  E_6^m = \begin{diagram}[1em]
 k^{m+1} &  \rTo^{\pi_+^m}  &  k^{m}   \\
  \uTo^{Id} &        & \uTo^{Id}     \\
k^{m+1}  & \rTo^{\pi_-^m}   &    k^{m}
\end{diagram} \ \ \ \  E_7^m = \begin{diagram}[1em]
 k^{m} &  \rTo^{Id}  &  k^{m}   \\
  \uTo^{\pi_+^m} &        & \uTo^{\pi_-^m}     \\
k^{m+1}  & \rTo^{Id}   &    k^{m+1}
\end{diagram}\ \ \ \  E_8^m = \begin{diagram}[1em]
 k^{m+1} &  \rTo^{Id}  &  k^{m+1}   \\
  \uTo^{j_+^m} &        & \uTo^{j_-^m}     \\
k^{m}  & \rTo^{Id}   &    k^{m}
\end{diagram}\nonumber \\ F_+ = \begin{diagram}[1em]
 k &  \rTo  &  0   \\
  \uTo &        & \uTo     \\
0  & \rTo  &    0
\end{diagram} \ \ \ \  F_- = \begin{diagram}[1em]
 0 &  \rTo  &  0   \\
  \uTo &        & \uTo     \\
0  & \rTo   &    k
\end{diagram} \ \ \ \  G_+ = \begin{diagram}[1em]
 k &  \rTo^{Id}  &  k   \\
  \uTo^{Id} &        & \uTo    \\
k  & \rTo   &    0
\end{diagram}\ \ \ \  G_- = \begin{diagram}[1em]
 0 &  \rTo  &  k   \\
  \uTo &        & \uTo^{Id}     \\
k  & \rTo^{Id}   &    k
\end{diagram}.\nonumber \end{gather}
\end{prop}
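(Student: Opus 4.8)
The plan is to reduce the classification to a root computation and then analyse, root by root, which indecomposable representations are exceptional. Since $Rep_k(Q_2)$ is hereditary, an object $\rho\in Rep_k(Q_2)$ is exceptional precisely when it is indecomposable, $\Hom(\rho,\rho)=k$ and $\hom^1(\rho,\rho)=0$; by \eqref{euler} the latter two force $\scal{\ul{\dim}(\rho),\ul{\dim}(\rho)}=1$, so $\ul{\dim}(\rho)$ is a positive real root of $Q_2$. Thus my first step is to list all positive real roots. The underlying graph of $Q_2$ is the $4$-cycle through the vertices $1,2,4,3$ (edges coming from $1\to 2$, $4\to 2$, $3\to 4$, $3\to 1$), so the Euler form is
\begin{gather*} \scal{\alpha,\alpha}=\tfrac12\big[(\alpha_1-\alpha_2)^2+(\alpha_2-\alpha_4)^2+(\alpha_4-\alpha_3)^2+(\alpha_3-\alpha_1)^2\big]. \end{gather*}
Hence $\scal{\alpha,\alpha}=1$ is equivalent to the four consecutive differences around the cycle having squares summing to $2$; being integers with signed sum $0$, exactly two of them equal $+1$ and $-1$ and the other two vanish. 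This yields exactly twelve ``shapes'' of positive real roots, each a one-parameter family obtained by raising an integer base level $m\geq 0$, and a direct comparison of dimension vectors identifies these twelve families with $\ul{\dim}(E_1^m),\dots,\ul{\dim}(E_8^m)$ together with the four families whose bottom members are $\ul{\dim}(F_+),\ul{\dim}(F_-),\ul{\dim}(G_+),\ul{\dim}(G_-)$.

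Next I would split the real roots by the defect $\scal{\delta,\alpha}=\alpha_3-\alpha_2$, where $\delta=(1,1,1,1)$ is the null root of the tame quiver $Q_2$ (of type $\wt{\mathbb A}_3$). Eight of the twelve shapes have defect $\pm 1$: these are preprojective or preinjective roots, each carries a unique indecomposable representation which is automatically exceptional (preprojective and preinjective indecomposables over a hereditary algebra are bricks without self-extensions), and one checks that these indecomposables are exactly the explicit $E_1^m,\dots,E_8^m$. Here the injective/surjective shape of the structure maps $\pi_\pm^m,j_\pm^m,\mathrm{Id}$ makes the brick property transparent, and then $\hom^1(\rho,\rho)=\hom(\rho,\rho)-\scal{\ul{\dim}(\rho),\ul{\dim}(\rho)}=1-1=0$ makes exceptionality automatic once $\Hom(\rho,\rho)=k$ is verified.

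The remaining four shapes have defect $0$; these are the regular real roots, and here lies the only genuinely delicate point. The quiver $Q_2$ has two non-homogeneous tubes, both of rank $2$, and the four minimal ($m=0$) regular roots $\ul{\dim}(F_\pm),\ul{\dim}(G_\pm)$ are exactly the quasi-simple roots at their mouths, paired by $\ul{\dim}(F_+)+\ul{\dim}(G_-)=\ul{\dim}(F_-)+\ul{\dim}(G_+)=\delta$; the corresponding $F_\pm,G_\pm$ are quasi-simple, hence bricks, hence exceptional. For $m\geq 1$ the defect-$0$ real roots equal (a quasi-simple root)$+\,m\delta$ and are carried only by regular indecomposables of regular length $\geq 3$ in a rank-$2$ tube, which are not bricks; thus these roots support no exceptional representation. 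This is precisely the phenomenon recorded in \cite[Lemma 2.1]{DK} that not every real root is the dimension vector of an exceptional object, and proving it --- understanding the regular component well enough to rule out exceptional representations above the mouth of a tube --- is the main obstacle of the argument; the preprojective/preinjective part is, by contrast, automatic.

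Finally I would assemble the two analyses: every exceptional $\rho$ has a real-root dimension vector, and only the growing families $E_i^m$ and the four mouths $F_\pm,G_\pm$ support exceptional indecomposables, each uniquely; the explicit representations listed realise them, are visibly indecomposable with $\Hom(\rho,\rho)=k$, and are pairwise non-isomorphic since the exceptional ones have pairwise distinct dimension vectors. Hence the displayed list is exactly $Rep_k(Q_2)_{exc}$ up to isomorphism. An alternative, more hands-on route avoiding tube theory is to invoke the string/band description of indecomposables for the gentle quiver $\wt{\mathbb A}_3$: the exceptional objects are then the rigid, non-wrapping string modules, while the band modules (homogeneous tubes) and the long strings are non-rigid; this reproduces the same list but shifts the work into the combinatorics of strings.
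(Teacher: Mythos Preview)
The paper does not contain a proof of this proposition: it is stated with the citation ``\cite{DK}'' and is quoted verbatim from \cite[Section~2]{DK}, so there is no in-paper argument to compare your proposal against. What the present paper adds is only the remark that the lists in Propositions~\ref{exceptional objects in Q1} and~\ref{exceptional objects in Q2} were obtained in \cite{DK} together with the subsequent computation of all $\hom,\hom^1$ between the listed objects.

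That said, your outline is a correct and standard route to the classification. Your Euler-form identity
\[
\scal{\alpha,\alpha}=\tfrac12\big[(\alpha_1-\alpha_2)^2+(\alpha_2-\alpha_4)^2+(\alpha_4-\alpha_3)^2+(\alpha_3-\alpha_1)^2\big]
\]
is right for the labelling $1=$ top-left, $2=$ top-right, $3=$ bottom-left, $4=$ bottom-right, and it does yield exactly twelve one-parameter families of positive real roots. Splitting by the defect $\scal{\delta,\alpha}=\alpha_3-\alpha_2$ correctly separates the eight preprojective/preinjective families (each supporting a unique indecomposable, automatically a brick with no self-extensions, hence exceptional) from the four defect-zero families. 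Your identification of the two rank-$2$ tubes with mouths $\{F_+,G_-\}$ and $\{F_-,G_+\}$ is correct, and the key point you flag---that in a stable tube of rank $r$ the rigid indecomposables are precisely those of regular length $<r$, so here only the four quasi-simples are exceptional---is exactly what is needed to exclude the higher regular real roots; this is indeed the content behind the remark (cf.\ \cite[Lemma~2.1]{DK}) that not every real root carries an exceptional representation. The alternative string/band argument for the gentle $\wt{\mathbb A}_3$ that you sketch at the end would also work and gives the same answer.

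One small point of presentation: when you say ``exactly two of them equal $+1$ and $-1$'', make explicit that the four cyclic differences sum to zero, so the two nonzero ones are forced to be $+1$ and $-1$; and when matching the eight non-regular families with $E_1^m,\dots,E_8^m$ you should note that uniqueness of the indecomposable at a preprojective/preinjective root means you only need to check that each displayed $E_i^m$ is indecomposable (equivalently, a brick), which is immediate from the shape of the maps $\pi_\pm^m$, $j_\pm^m$.
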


In \cite[Subsection 2.2]{DK}  we  compute  $\hom(\rho,\rho')$, $\hom^1(\rho,\rho')$ with  $\rho,\rho'$ varying throughout the obtained lists in Propositions \ref{exceptional objects in Q1}, \ref{exceptional objects in Q2} and group these dimensions in  tables of series.
From these tables one finds that the only Ext-nontrivial  couple  in
$Rep_k(Q_1)$  is $\{M,M'\}$ and the
Ext-nontrivial couples in $Rep_k(Q_2)$ are  $\{F_+,G_-\}$,
$\{F_-,G_+\}$.

From the obtained tables with dimensions one verifies also the following properties. 
\begin{coro}[\cite{DK}] \label{RP property 1,2 and.. for Q1} The categories $Rep_{k}(Q_1)$, $Rep_{k}(Q_2)$ satisfy the following properties:
\begin{itemize}
    \item[\textbf{(a)}] RP property 1, RP property 2 (see Definition \ref{def of ENC}).
    \item[\textbf{(b)}] For any two exceptional objects $X, Y \in Rep_{k}(Q_i)$ at most one  degree in $\{ \hom^p(X,Y) \}_{p\in \ZZ}$ is nonzero, where  $i\in \{1,2\}$.
    \end{itemize}
\end{coro}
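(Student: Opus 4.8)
The plan is to deduce both statements by direct inspection of the tables of dimensions $\hom(\rho,\rho')$, $\hom^1(\rho,\rho')$ computed in \cite[Subsection 2.2]{DK} for $\rho,\rho'$ ranging over the complete lists of exceptional objects in Propositions \ref{exceptional objects in Q1} and \ref{exceptional objects in Q2}. Since $Rep_k(Q_1)$ and $Rep_k(Q_2)$ are hereditary, for any pair $X,Y$ the only possibly nonzero dimensions among $\{\hom^p(X,Y)\}_{p\in\ZZ}$ are $\hom(X,Y)$ (the degree $p=0$) and $\hom^1(X,Y)$ (the degree $p=1$); hence $\hom^*(X,Y)=0$ is equivalent to $\hom(X,Y)=\hom^1(X,Y)=0$, and part \textbf{(b)} is equivalent to the assertion that the product $\hom(X,Y)\cdot\hom^1(X,Y)$ vanishes for every ordered pair of exceptional objects.

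For part \textbf{(b)} I would organise the verification by series. The exceptional objects come in finitely many one-parameter families $E_i^m$ (together with the sporadic objects $M,M'$, resp. $F_\pm,G_\pm$), and for each ordered pair of families the quantities $\hom(E_i^m,E_j^{m'})$ and $\hom^1(E_i^m,E_j^{m'})$ are recorded in \cite[Subsection 2.2]{DK} as explicit expressions in the parameters $m,m'$. The verification then amounts to checking, entry by entry in these finitely many series, that whenever one of the two dimensions is positive the other is forced to vanish; since each entry is an explicit (piecewise linear in $m,m'$) function, this is a finite, if lengthy, inspection.

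For part \textbf{(a)} the crucial reduction is that the tables already pin down the complete list of Ext-nontrivial couples: only $\{M,M'\}$ in $Rep_k(Q_1)$, and only $\{F_+,G_-\}$, $\{F_-,G_+\}$ in $Rep_k(Q_2)$. Consequently \textbf{RP Property 1} and \textbf{RP Property 2} need only be tested against these finitely many couples (and, to be safe, against both orderings of each couple, since the two entries of a couple play asymmetric roles in Definition \ref{def of ENC}). To check \textbf{RP Property 1} for a fixed couple $\{\Gamma,\Gamma'\}$ I would read off from the tables the set of exceptional $X$ with $\hom^*(\Gamma,X)=0$ — the objects right-orthogonal to $\Gamma$ — and verify directly that every such $X$ also satisfies $\hom^*(X,\Gamma')=0$. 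To check \textbf{RP Property 2} I would enumerate the pairs $(X,Y)$ with $\hom(\Gamma,X)\neq 0$, $\hom(X,Y)\neq 0$ and $\hom^*(\Gamma,Y)=0$, and confirm $\hom(\Gamma',Y)\neq 0$ for each. Both tests are again driven by the explicit series.

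The main obstacle is purely one of bookkeeping: the exceptional objects sit in infinite one-parameter families, so the implications of Definition \ref{def of ENC} and the product condition of \textbf{(b)} must be confirmed uniformly in the parameters rather than for finitely many objects. The feature that keeps this manageable is that every Ext-nontrivial couple consists of the small objects $M,M',F_\pm,G_\pm$, whose dimension vectors are thin; these objects satisfy strong vanishing (with $\hom$ or $\hom^1$ equal to $0$ or $1$) against the entire families, so the relevant orthogonal sets of $X$ and $Y$ are cut out by a few simple parameter inequalities, and the implications of \textbf{RP Property 1} and \textbf{RP Property 2} collapse to short case distinctions.
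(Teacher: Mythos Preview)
Your proposal is correct and matches the paper's own treatment: the corollary is quoted from \cite{DK}, and the paper's only justification is the sentence ``From the obtained tables with dimensions one verifies also the following properties,'' i.e.\ exactly the table-by-table inspection you outline. Your description of how to organise that inspection (reducing \textbf{(b)} to the vanishing of the product $\hom\cdot\hom^1$ on each series, and reducing \textbf{(a)} to the finitely many Ext-nontrivial couples $\{M,M'\}$, $\{F_+,G_-\}$, $\{F_-,G_+\}$) is precisely what is done in \cite[Section~2]{DK}.
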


\section{  Is  \texorpdfstring{$\hom(\rho,\rho')=0$}{\space} or   \texorpdfstring{$\hom^1(\rho,\rho')=0$}{\space} for  exceptional representations   \texorpdfstring{$\rho,\rho'$}{\space} ?} \label{question}

From now on  $Q$ is  any connected quiver.   We denote the set of vertices  by $V(Q)$, the set of arrows by $Arr(Q)$,   and the underlying non-oriented graph by $\Gamma(Q)$. Let
\be \label{origin,end} Arr(Q) \rightarrow  V(Q)\times V(Q)  \ \  \ \ a \mapsto (s(a),t(a)) \in V(Q) \times V(Q) \ee  be the   function assigning
 to an arrow $a\in Arr(Q)$ its origin $s(a) \in V(Q)$ and its end $t(a) \in V(Q)$. We recall that    the Euler form $\langle , \rangle$ of $Q$ is:
\begin{gather}  \langle \alpha , \beta \rangle_Q = \sum_{i \in V(Q)}  \alpha_i  \beta_i - \sum_{a \in Arr(Q)}  \alpha_{s(a)}  \beta_{t(a)} \qquad  \alpha , \beta \in \ZZ^{V(Q)}. \end{gather}
 
The dual quiver $Q^{\vee}$ has $V(Q^{\vee})=V(Q)$, $Arr(Q^{\vee})=Arr(Q)$, but $(s^{\vee},t^{\vee})=(t,s)$. 
By  transposing matrices we obtain an equivalence \begin{gather} \label{vee} \bd Rep_k(Q)^{op} & \rTo^{\vee} &  Rep_k(Q^{\vee}). \ed \end{gather}
The following properties hold
\begin{gather} \label{vee1} \forall \rho, \rho' \in   Rep_k(Q)    \qquad \ul{\dim}(\rho)=\ul{\dim}(\rho^{\vee}) \quad \hom_{Q}^i(\rho,\rho')=\hom_{Q^{\vee}}^i(\rho'^{\vee},\rho^{\vee}) \\ 
 \label{vee2} \forall \alpha, \beta \in \NN^{V(Q)}      \qquad \langle \alpha, \beta \rangle_{Q^{\vee}}=\langle \beta, \alpha \rangle_{Q}  \end{gather}
The basic observation of this paper is:
\begin{lemma}\label{lemma for leq 0 and ENC} If  any two exceptional objects $\rho,\rho' \in Rep_k(Q)$ satisfy $\hom(\rho,\rho')=0$ or  $\hom^1(\rho,\rho')=0$, then any Ext-nontrivial couple  $\{\rho, \rho'\}$ satisfies  $\langle \ul{\dim}(\rho)+\ul{\dim}(\rho'), \ul{\dim}(\rho)+\ul{\dim}(\rho')\rangle \leq 0$. 
\end{lemma}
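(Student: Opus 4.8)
The plan is to expand the quadratic form on the left and reduce the whole expression to the two degree-one dimensions $\hom^1(\rho,\rho')$, $\hom^1(\rho',\rho)$, which Ext-nontriviality forces to be strictly positive.

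Write $\alpha=\ul{\dim}(\rho)$ and $\alpha'=\ul{\dim}(\rho')$. First I would expand, using bilinearity of the Euler form,
\begin{gather}
\scal{\alpha+\alpha',\alpha+\alpha'}=\scal{\alpha,\alpha}+\scal{\alpha,\alpha'}+\scal{\alpha',\alpha}+\scal{\alpha',\alpha'}. \nonumber
\end{gather}
Since $\rho$ and $\rho'$ are exceptional, the two diagonal terms each equal $1$, as noted right after \eqref{euler}. I would then rewrite the off-diagonal terms through the Euler formula \eqref{euler}, as $\scal{\alpha,\alpha'}=\hom(\rho,\rho')-\hom^1(\rho,\rho')$ and $\scal{\alpha',\alpha}=\hom(\rho',\rho)-\hom^1(\rho',\rho)$, which turns the quadratic form into
\begin{gather}
\scal{\alpha+\alpha',\alpha+\alpha'}=2+\hom(\rho,\rho')+\hom(\rho',\rho)-\hom^1(\rho,\rho')-\hom^1(\rho',\rho). \nonumber
\end{gather}

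The decisive step is to feed in the hypothesis together with the definition of an Ext-nontrivial couple. Because $\{\rho,\rho'\}$ is Ext-nontrivial, both $\hom^1(\rho,\rho')$ and $\hom^1(\rho',\rho)$ are nonzero. I would then apply the dichotomy hypothesis to the ordered pair $(\rho,\rho')$: it says $\hom(\rho,\rho')=0$ or $\hom^1(\rho,\rho')=0$, and since the second alternative is excluded, $\hom(\rho,\rho')=0$. Applying it verbatim to $(\rho',\rho)$ gives $\hom(\rho',\rho)=0$ in the same manner. Substituting both vanishings leaves $\scal{\alpha+\alpha',\alpha+\alpha'}=2-\hom^1(\rho,\rho')-\hom^1(\rho',\rho)$, and since each $\hom^1$-term is a positive integer, the right-hand side is $\leq 0$, which is the claim.

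I do not expect a genuine obstacle here, as the argument is purely formal; the one point deserving care is that the dichotomy must be invoked separately for both orderings $(\rho,\rho')$ and $(\rho',\rho)$, each time using Ext-nontriviality to discard the $\hom^1$-alternative, so that the two $\hom$-terms are eliminated at once.
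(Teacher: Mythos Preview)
Your proposal is correct and follows essentially the same argument as the paper: expand the Euler form bilinearly, use exceptionality to set the diagonal terms to $1$, use Ext-nontriviality together with the dichotomy hypothesis (applied to both orderings) to kill the $\hom$-terms, and conclude $2-\hom^1(\rho,\rho')-\hom^1(\rho',\rho)\leq 0$. The only cosmetic difference is that the paper applies \eqref{euler} after first deducing $\hom(\rho,\rho')=\hom(\rho',\rho)=0$, whereas you substitute \eqref{euler} one step earlier; the content is identical.
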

\bpr Since $\rho, \rho'$ are exceptional representations, we have $ \langle \ul{\dim}(\rho), \ul{\dim}(\rho)\rangle=\langle \ul{\dim}(\rho'), \ul{\dim}(\rho')\rangle=1$, therefore  \begin{gather} \label{lemma for leq 0 and ENC eq}  \langle \ul{\dim}(\rho)+\ul{\dim}(\rho'), \ul{\dim}(\rho)+\ul{\dim}(\rho')\rangle=  2 +  \langle \ul{\dim}(\rho), \ul{\dim}(\rho')\rangle+ \langle\ul{\dim}(\rho'), \ul{\dim}(\rho)\rangle.  \end{gather}
Since  $\hom^1(\rho, \rho')\neq 0$, $\hom^1(\rho', \rho)\neq 0$, by the given property of the exceptional objects we obtain  $\hom(\rho, \rho')=\hom(\rho', \rho)= 0$, hence by \eqref{euler}  we obtain $ \langle \ul{\dim}(\rho), \ul{\dim}(\rho')\rangle=-\hom^1(\rho, \rho')<0$, $ \langle \ul{\dim}(\rho'), \ul{\dim}(\rho)\rangle=-\hom^1(\rho', \rho)<0$. Now the lemma follows from  \eqref{lemma for leq 0 and ENC eq}.
\epr 
In Corollary \ref{RP property 1,2 and.. for Q1} (b) we see that the condition of the lemma above is satisfied in $Rep_k(Q_1)$, $Rep_k(Q_2)$. 
 This condition is satisfied in all Dynkin quivers as well. More precisely, in  \cite[p. 59]{Ringel} is shown that for any two exceptional representations $\rho, \rho'$ in $Rep_k(Q)$  for Dynkin $Q$ we have $\hom(\rho, \rho')=0$ or $\hom^1(\rho,\rho')=0$   ( here C. M. Rignel uses  the fact that Dynkin quivers are representation directed   \cite{BGP}  \cite{DR}).\footnote{We thank Pranav Pandit for pointing us this fact.}   Now it follows:

\begin{coro} \label{noENCdynkin} If $Q$ is  a Dynkin quiver, then there are no Ext-nontrivial  couples in $Rep_k(Q)$, i. e. for any two exceptional representations $\rho,\rho' \in Rep_k(Q)$ we have $\hom^1(\rho,\rho')=0$ or $\hom^1(\rho',\rho)=0$. 
\end{coro}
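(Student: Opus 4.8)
The plan is to derive the statement by contradiction, feeding Corollary \ref{coro1} into Lemma \ref{lemma for leq 0 and ENC} and then contradicting the resulting inequality with the positive-definiteness of the Euler form on a Dynkin graph.

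First I would observe that Corollary \ref{coro1} supplies exactly the hypothesis needed to invoke Lemma \ref{lemma for leq 0 and ENC}. Indeed, for a Dynkin quiver $Q$ Corollary \ref{coro1} asserts that $F_{\rho,\rho'}$ has maximal rank for any two exceptional representations $\rho,\rho'\in Rep_k(Q)$, which by Lemma \ref{lemma about F_rhi,rhi'} (c),(d) is equivalent to $\hom(\rho,\rho')=0$ or $\hom^1(\rho,\rho')=0$. Thus the global property required by Lemma \ref{lemma for leq 0 and ENC} holds throughout $Rep_k(Q)$.

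Next, suppose toward a contradiction that $\{\rho,\rho'\}$ is an Ext-nontrivial couple, i.e.\ $\hom^1(\rho,\rho')\neq 0$ and $\hom^1(\rho',\rho)\neq 0$. Applying Lemma \ref{lemma for leq 0 and ENC} to this couple yields
\[
\langle \ul{\dim}(\rho)+\ul{\dim}(\rho'),\ \ul{\dim}(\rho)+\ul{\dim}(\rho')\rangle \leq 0.
\]
The decisive final step is to contradict this inequality using the positivity of the Euler form for Dynkin graphs. For a Dynkin quiver the quadratic form $q(\beta)=\langle \beta,\beta\rangle=\sum_{i}\beta_i^2-\sum_{a\in Arr(Q)}\beta_{s(a)}\beta_{t(a)}$ is the Tits form, which is positive definite precisely because $\Gamma(Q)$ is a Dynkin diagram. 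Setting $\beta=\ul{\dim}(\rho)+\ul{\dim}(\rho')$, this vector is a nonzero element of $\NN^{V(Q)}$ (it is the sum of two nonzero vectors in $\NN^{V(Q)}$, so no cancellation is possible), whence $\langle \beta,\beta\rangle>0$, contradicting the displayed inequality. Therefore no Ext-nontrivial couple exists, which is exactly the assertion that $\hom^1(\rho,\rho')=0$ or $\hom^1(\rho',\rho)=0$ for every pair of exceptional representations.

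I do not expect any serious obstacle here; the argument is a short three-move combination. The only point requiring the slightest care is the non-vanishing of $\beta$, and the only genuinely external input is the classical positive-definiteness of the Tits form on Dynkin graphs, which is precisely what makes the Dynkin hypothesis indispensable.
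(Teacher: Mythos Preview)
Your proof is correct and follows essentially the same route as the paper: invoke Corollary \ref{coro1} to satisfy the hypothesis of Lemma \ref{lemma for leq 0 and ENC}, then contradict the resulting inequality $\langle \ul{\dim}(\rho)+\ul{\dim}(\rho'),\ul{\dim}(\rho)+\ul{\dim}(\rho')\rangle\leq 0$ using the positive-definiteness of the Euler form on a Dynkin quiver. The paper's version is terser, but the logic is identical.
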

\bpr Recall that for such a quiver we have $\langle \alpha, \alpha \rangle >0$ for each $\alpha\in \NN^{V(Q)} \setminus \{0\}$. Since  for any two exceptional representations $\rho,\rho' \in Rep_k(Q)$ we have $\hom(\rho,\rho')=0$ or $\hom^1(\rho,\rho')=0$, we can apply   Lemma \ref{lemma for leq 0 and ENC}. The corollary follows.   \epr

\begin{coro} \label{Dynkin are regularity preserv} If $Q$ is  a Dynkin quiver, then  $Rep_k(Q)$ is regularity preserving category. Furthermore, there are no $\sigma$-irregular objects for any $\sigma \in \st(D^b(Rep_k(Q)))$.
\end{coro}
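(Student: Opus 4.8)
The plan is to deduce both assertions directly from Corollary \ref{noENCdynkin}, which tells us that a Dynkin quiver $Q$ has no Ext-nontrivial couples in $Rep_k(Q)$, combined with the properties of the algorithm $\mk{alg}$ recalled in Subsection 1.1. First I would fix an arbitrary $\sigma \in \st(D^b(Rep_k(Q)))$ and an arbitrary non-semistable exceptional object $R$, and form the distinguished triangle $\mk{alg}(R)$. The key input is the statement recalled in Subsection 1.1 that, when $\mc A$ has no Ext-nontrivial couples, the triangle $\mk{alg}(R)$ automatically satisfies the vanishings $\hom^1(U,U)=\hom^1(V,V)=\hom^*(U,V)=0$; by the very definition of $\sigma$-regularity this means $R$ is $\sigma$-regular. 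Since $R$ and $\sigma$ were arbitrary, every non-semistable exceptional object is $\sigma$-regular, which is exactly the ``furthermore'' claim that there are no $\sigma$-irregular objects.

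For the regularity-preserving property I would then unwind Definition \ref{RP category}. Suppose we are given the data of a $\sigma$-regular object $R$, a relation $R \DashedArrow (S,E)$, and $E \not\in \sigma^{ss}$. The object $E$ is the second component of an exceptional pair, hence exceptional, and it is non-semistable by hypothesis; therefore the previous paragraph applies to $E$ and shows that $E$ is $\sigma$-regular. This is precisely the conclusion required by Definition \ref{RP category}, so $Rep_k(Q)$ is regularity-preserving. Alternatively, one can route the argument through Definition \ref{def of ENC}: the absence of Ext-nontrivial couples makes RP Property 1 and RP Property 2 hold vacuously, and by \cite[Section 6]{DK} either of these properties already implies regularity-preserving.

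I do not expect any genuine obstacle here: all the substantive work has already been carried out in Sections \ref{the linear map}--\ref{Dynkin}, culminating in Corollary \ref{coro1} and Corollary \ref{noENCdynkin}, together with the positivity of the Euler form used in Corollary \ref{noENCdynkin} via Lemma \ref{lemma for leq 0 and ENC}. The only point requiring care is to cite correctly the background result from \cite{DK} that the vanishings guaranteed by $\mk{alg}$ in the absence of Ext-nontrivial couples are exactly the defining conditions of $\sigma$-regularity, so that ``no Ext-nontrivial couples'' upgrades uniformly in $\sigma$ to ``no $\sigma$-irregular objects''. Once this identification is in place, both assertions follow immediately.
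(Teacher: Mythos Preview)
Your proposal is correct and matches the paper's proof in substance: both deduce from Corollary \ref{noENCdynkin} that RP Properties 1 and 2 hold vacuously, hence $Rep_k(Q)$ is regularity preserving, and that the absence of Ext-nontrivial couples forces every non-semistable exceptional object to be $\sigma$-regular. The only cosmetic difference is that the paper cites the precise results from \cite{DK} (Proposition 6.6 for regularity-preserving and Lemma 6.3 for the non-existence of $\sigma$-irregular objects) rather than the informal summary in Subsection 1.1.
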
 
\bpr  Since there are no Ext-nontrivial couples,  RP properties 1,2(Definition \ref{def of ENC}) are tautologically satisfied. Then by \cite[Proposition 6.6]{DK} $Rep_k(Q)$ is regularity preserving. Actually, due to \cite[Lemma 6.3]{DK}, there are no $\sigma$-irregular objects for any $\sigma \in \st(D^b(Rep_k(Q)))$.
\epr

\section{The differential  \texorpdfstring{$F_{\rho,\rho'}^Q$}{\space}} \label{the linear map}

The condition of Lemma \ref{lemma for leq 0 and ENC} is related to the   standard differential in the 2-term complex computing $\RR \Hom_Q(\rho,\rho')$. We recall this definition:
\begin{df} \label{the map F_rho rho'} For any two representations $\rho,\rho' \in Rep_k(Q)$ we  denote  a by  $F^Q_{\rho,\rho'}$ (we omit the superscript $Q$, when it is clear which is  the quiver  in question) the   standard differential in the 2-term complex computing $\RR \Hom_Q(\rho,\rho')$, which is defined as follows:  
\begin{gather}  F^Q_{\rho, \rho'} : \prod_{i\in V(Q)} \Hom(k^{\alpha_i},k^{\alpha'_i}) \rightarrow \prod_{a\in Arr(Q)} \Hom(k^{\alpha_{s(a)}}, k^{\alpha'_{t(a)}}) \end{gather}
  where
	$\alpha= \ul{\dim}(\rho),$ $\alpha'= \ul{\dim}(\rho')\in \NN^{V(Q)}$, and:
		\begin{gather}  F^Q_{\rho, \rho'}\left (\{f_i\}_{i\in V(Q)} \right ) = \{f_{t(a)}\circ \rho_a - \rho'_a \circ f_{s(a)} \}_{a \in Arr(Q)}. \end{gather}
\end{df}
This differential  will be used to obtain the condition in Lemma \ref{lemma for leq 0 and ENC}. More precisely, we have the following standard facts: 
\begin{lemma}  \label{lemma about F_rhi,rhi'} Let $Q$ be a quiver and $\rho, \rho' \in Rep_k(Q)$  be  two representations.  The following hold: 

{\rm (a)}    $\Hom_{Rep_k(Q)}(\rho,\rho')=\ker\left (F^Q_{\rho,\rho'} \right )$

{\rm (b) }  $\langle \ul{\dim}(\rho) , \ul{\dim}(\rho') \rangle =\dim\left ({\rm dom}\left ( F^Q_{\rho, \rho'} \right ) \right )-\dim\left ({\rm cod}\left ( F^Q_{\rho, \rho'} \right ) \right )$, where ${\rm dom}\left ( F^Q_{\rho, \rho'} \right )$ and \\ ${\rm cod}\left ( F^Q_{\rho, \rho'} \right ) $ denote the domain and codomain of $F_{\rho,\rho'}^Q$.

{\rm (c) }  Let  $\langle \ul{\dim}(\rho), \ul{\dim}(\rho')\rangle \geq 0$. Then  $F^Q_{\rho,\rho'}$ has maximal rank   iff  $\hom(\rho,\rho')=\langle \ul{\dim}(\rho), \ul{\dim}(\rho')\rangle$ and $\hom^1(\rho,\rho')=0$. 

{\rm (d) }  Let $\langle \ul{\dim}(\rho), \ul{\dim}(\rho')\rangle < 0$. Then $F^Q_{\rho,\rho'}$ has maximal rank iff $\hom(\rho,\rho')=0$ and $\hom^1(\rho,\rho')=-\langle \ul{\dim}(\rho), \ul{\dim}(\rho')\rangle$. 

{\rm (e) }  Let $Q^{\vee}$ be the dual quiver and $\vee$ be the equivalence in \eqref{vee}, then  $F^Q_{\rho,\rho'}$   has maximal rank iff  $F^{Q^\vee}_{\rho'^{\vee},\rho^{\vee}}$  has maximal rank 
\end{lemma}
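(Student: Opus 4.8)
The plan is to derive all five parts from the definition of $F^Q_{\rho,\rho'}$ together with the rank--nullity theorem and the Euler identity \eqref{euler}. Part (a) is immediate on unwinding definitions: a morphism of representations $\rho\to\rho'$ is by definition a family $\{f_i\}_{i\in V(Q)}$ with $f_{t(a)}\circ\rho_a=\rho'_a\circ f_{s(a)}$ for every $a\in Arr(Q)$, that is, a family annihilated by each component of $F^Q_{\rho,\rho'}$; hence $\Hom_{Rep_k(Q)}(\rho,\rho')=\ker(F^Q_{\rho,\rho'})$.

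For (b) I would simply count dimensions. With $\alpha=\ul{\dim}(\rho)$ and $\alpha'=\ul{\dim}(\rho')$, the domain has dimension $\dim({\rm dom}(F^Q_{\rho,\rho'}))=\sum_{i\in V(Q)}\alpha_i\alpha'_i$ and the codomain has dimension $\dim({\rm cod}(F^Q_{\rho,\rho'}))=\sum_{a\in Arr(Q)}\alpha_{s(a)}\alpha'_{t(a)}$. Their difference is exactly the Euler form $\langle\alpha,\alpha'\rangle=\sum_{i\in V(Q)}\alpha_i\alpha'_i-\sum_{a\in Arr(Q)}\alpha_{s(a)}\alpha'_{t(a)}$, which gives (b).

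Next I would extract two expressions for the rank. By (a) and rank--nullity, $\operatorname{rank}(F^Q_{\rho,\rho'})=\dim({\rm dom})-\hom(\rho,\rho')$. Combining \eqref{euler} with (b) yields $\hom^1(\rho,\rho')=\hom(\rho,\rho')-\langle\alpha,\alpha'\rangle=\dim({\rm cod})-\operatorname{rank}(F^Q_{\rho,\rho'})$, so equivalently $\operatorname{rank}(F^Q_{\rho,\rho'})=\dim({\rm cod})-\hom^1(\rho,\rho')$ (i.e. $\hom^1$ is the dimension of the cokernel). Maximality of the rank means $\operatorname{rank}(F^Q_{\rho,\rho'})=\min(\dim({\rm dom}),\dim({\rm cod}))$, and by (b) the sign of $\langle\alpha,\alpha'\rangle$ decides which factor is the minimum. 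When $\langle\alpha,\alpha'\rangle\geq0$ the minimum is $\dim({\rm cod})$, so maximal rank is equivalent to $\hom^1(\rho,\rho')=0$, which by \eqref{euler} is in turn equivalent to $\hom(\rho,\rho')=\langle\alpha,\alpha'\rangle$; this is (c). When $\langle\alpha,\alpha'\rangle<0$ the minimum is $\dim({\rm dom})$, so maximal rank is equivalent to $\hom(\rho,\rho')=0$, whence $\hom^1(\rho,\rho')=-\langle\alpha,\alpha'\rangle$; this is (d).

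Finally, for (e) I would invoke (c) and (d): they show that ``$F$ has maximal rank'' is determined entirely by the numbers $\hom(\rho,\rho')$, $\hom^1(\rho,\rho')$ and the value of $\langle\ul{\dim}(\rho),\ul{\dim}(\rho')\rangle$. All of these are preserved under $\vee$: by \eqref{vee1} we have $\ul{\dim}(\rho^{\vee})=\ul{\dim}(\rho)$ and $\hom^i_{Q^\vee}(\rho'^{\vee},\rho^{\vee})=\hom^i_Q(\rho,\rho')$ for $i=0,1$, while \eqref{vee2} gives $\langle\ul{\dim}(\rho'^{\vee}),\ul{\dim}(\rho^{\vee})\rangle_{Q^\vee}=\langle\ul{\dim}(\rho),\ul{\dim}(\rho')\rangle_{Q}$. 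Thus the criteria of (c)/(d) hold for $F^{Q^\vee}_{\rho'^{\vee},\rho^{\vee}}$ precisely when they hold for $F^Q_{\rho,\rho'}$, which is (e). None of these steps is genuinely difficult --- they are the ``standard facts'' the statement advertises --- and the only points needing care are the case split in (c)/(d) according to the sign of $\langle\alpha,\alpha'\rangle$ (keeping straight which of domain and codomain is smaller) and, in (e), the reversal of the arguments together with the flip of the Euler form under $\vee$.
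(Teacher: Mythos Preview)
Your proof is correct and follows essentially the same approach as the paper's own proof, which merely states that (a) and (b) follow from the definitions, (c) and (d) from (a), (b), and \eqref{euler}, and (e) from (c), (d), \eqref{vee1}, and \eqref{vee2}. You have simply supplied the details the paper omits.
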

\bpr (a) and (b) follow from the definitions, (c) and (d) follow from (a), (b) and \eqref{euler}. Finally, (e) follows from  (c), (d),  \eqref{vee1}, and \eqref{vee2}.   \epr

For any $\alpha \in \NN^{V(Q)}$ we denote \begin{gather} GL(\alpha) = \prod_{i\in V(Q)} GL(\alpha_i,k); Rep(\alpha)=\{\rho \in Rep_k(Q) : \ul{\dim}(\rho)=\alpha \}=\prod_{a\in Arr(Q)} \Hom(k^{\alpha_{s(a)}},k^{\alpha_{t(a)}}). \nonumber \end{gather}  For any $\alpha \in \NN^{V(Q)}$ the isomorphism classes of representations with dimension vector $\alpha$ are the orbits of the left action: 
\begin{gather}\label{left action} GL(\alpha) \times Rep(\alpha) \rightarrow  Rep(\alpha) \qquad g.\rho= \{g(t(a))\circ \rho_a \circ g(s(a))^{-1}\}_{a \in Arr(Q)}.  \end{gather}
For $\rho \in Rep(\alpha)$ the orbit containing $\rho$ is denoted by ${\mc O}_{\rho}$.

 Let $\alpha, \alpha'\in \NN^{V(Q)}$, $ g\in GL(\alpha) $,  $g'\in GL(\alpha')$. It is easy to show that for any $\rho \in Rep(\alpha)$, $\rho' \in Rep(\alpha')$ we have  
\begin{gather}\label{action of g} F_{g.\rho,\  \rho'}= R_{g^{-1}}\circ F_{\rho, \rho'} \circ  R_{g} \qquad  F_{\rho, \ g'.\rho'}= L_{g'}\circ F_{\rho, \rho'} \circ  L_{g'^{-1}},   \end{gather}  where:   
\begin{gather} L_{g'}, R_g: \prod_{i\in V(Q)} \Hom(k^{\alpha_i},k^{\alpha'_i})  \rightarrow  \prod_{i\in V(Q)} \Hom(k^{\alpha_i},k^{\alpha'_i})  \nonumber \\[-3mm] \\  L_{g'}\left (\{f_i\}_{i\in V(Q)} \right )= \{g'_i\circ f_i\}_{i\in V(Q)},  \ \  \ R_{g}\left (\{f_i\}_{i\in V(Q)} \right )= \{ f_i\circ g_i\}_{i\in V(Q)}; \nonumber \end{gather}

\begin{gather}   L_{g'}, R_{g}: \prod_{a\in Arr(Q)} \Hom(k^{\alpha_{s(a)}}, k^{\alpha'_{t(a)}}) \rightarrow  \prod_{a\in Arr(Q)} \Hom(k^{\alpha_{s(a)}}, k^{\alpha'_{t(a)}})  \nonumber \\[-3mm]  \\  L_{g'}\left (\{u_a\}_{a\in Arr(Q)} \right )= \{g'_{t(a)}\circ u_a\}_{a\in Arr(Q)} \ \  R_{g}\left (\{u_a\}_{a\in Arr(Q)} \right )= \{u_a \circ g_{s(a)} \}_{a\in Arr(Q)}.\nonumber \end{gather}

In particular, we see immediately  that 
\begin{lemma} Let  $\alpha, \alpha'\in \NN^{V(Q)}$, $(\rho, \rho') \in Rep(\alpha)\times  Rep(\alpha')$. If $F_{\rho, \rho'}$ is not of maximal rank, then $F_{x,y}$ is not of maximal rank for any $(x,y) \in \mc O_{\rho} \times \mc O_{\rho'}$.
\end{lemma}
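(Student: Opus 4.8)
The plan is to prove the slightly stronger statement that the rank of $F_{x,y}$ is in fact \emph{constant} on the orbit product $\mc O_{\rho}\times \mc O_{\rho'}$, equal to $\mathrm{rank}(F_{\rho,\rho'})$; the non-maximality assertion is then immediate. First I would use the description of orbits via the action \eqref{left action} to write an arbitrary point of the orbit product as $(x,y)=(g.\rho,\ g'.\rho')$ for suitable $g\in GL(\alpha)$ and $g'\in GL(\alpha')$.

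Next I would combine the two equivariance identities recorded in \eqref{action of g}. Applying the first identity to the pair $(g.\rho,\ g'.\rho')$ (note that the maps $R_g,R_{g^{-1}}$ are defined using only $g\in GL(\alpha)$ and so are valid for any choice of second argument, here $g'.\rho'$), and then substituting the second identity for $F_{\rho,\ g'.\rho'}$, yields
\begin{gather} F_{x,y}=F_{g.\rho,\ g'.\rho'}= R_{g^{-1}}\circ L_{g'} \circ F_{\rho,\rho'} \circ L_{g'^{-1}}\circ R_{g}. \nonumber \end{gather}
The decisive observation is that the four maps $R_{g},R_{g^{-1}},L_{g'},L_{g'^{-1}}$ are linear automorphisms of the (fixed) domain and codomain of $F_{\cdot,\cdot}$: directly from their defining formulas $R_{g}$ and $R_{g^{-1}}$ are mutually inverse on each of the two spaces $\prod_{i}\Hom(k^{\alpha_i},k^{\alpha'_i})$ and $\prod_{a}\Hom(k^{\alpha_{s(a)}},k^{\alpha'_{t(a)}})$, and likewise $L_{g'}$ and $L_{g'^{-1}}$. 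Since pre- and post-composition with linear isomorphisms leaves the rank of a linear map unchanged, I conclude $\mathrm{rank}(F_{x,y})=\mathrm{rank}(F_{\rho,\rho'})$.

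Finally, I would note that the dimensions of the domain and codomain of $F_{\cdot,\cdot}$ depend only on $\alpha$ and $\alpha'$, not on the chosen representatives; hence the threshold $\min\bigl(\dim\mathrm{dom},\dim\mathrm{cod}\bigr)$ defining maximal rank is the same at every point of $\mc O_{\rho}\times \mc O_{\rho'}$. Therefore failure of maximal rank at $(\rho,\rho')$ forces failure at every $(x,y)$ in the orbit product, as claimed. I do not anticipate any genuine obstacle here: the entire content is the invertibility of $R_{g}$ and $L_{g'}$ together with the equivariance relations \eqref{action of g} established just above the statement, so the argument is purely formal.
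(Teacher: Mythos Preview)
Your proposal is correct and is precisely the argument the paper has in mind: the paper states this lemma immediately after the equivariance identities \eqref{action of g} with only the phrase ``In particular, we see immediately that'', and you have simply made explicit the obvious deduction that conjugation by the invertible maps $R_g$, $L_{g'}$ preserves rank. Your slightly stronger formulation (constancy of rank on the orbit product) is exactly what is implicitly used.
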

The following corollary will be widely used.
\begin{coro} \label{main coro} Let $\alpha, \alpha' \in \NN^{V(Q)}$ be real roots of $Q$. Let $\rho \in Rep(\alpha)$, $\rho' \in Rep(\alpha')$  be exceptional representations. 
 If $F_{x,y}$ has maximal rank for some $(x,y)\in  Rep(\alpha)\times Rep(\alpha')$, then $F_{\rho,y}$ and  $F_{x,\rho'}$  have maximal rank. 
For each $a \in Arr(Q)$ the linear maps $\rho_a$, $\rho'_a$ have maximal rank.
\end{coro}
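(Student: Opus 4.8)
The plan is to combine three ingredients: having maximal rank is a Zariski-open condition on the parameters; the maximal-rank locus is invariant under the group actions of \eqref{left action}; and the cited algebro-geometric fact that the orbit $\mc O_\rho$ of an exceptional representation with real-root dimension vector is Zariski-open, hence dense, in the irreducible affine space $Rep(\alpha)$.

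First I would fix $y\in Rep(\alpha')$ and set $V=\{x\in Rep(\alpha) : F_{x,y}\ \text{has maximal rank}\}$. The dimensions of $\dom(F_{x,y})$ and $\cod(F_{x,y})$ depend only on $\alpha,\alpha'$, while the entries of $F_{x,y}$ are polynomial in $x$; hence $V$ is the non-vanishing locus of the minors of that full size and is Zariski-open. It is nonempty, since it contains the point $x$ provided by the hypothesis. The first identity in \eqref{action of g} gives $F_{g.x,\,y}=R_{g^{-1}}\circ F_{x,y}\circ R_g$ with $R_g,R_{g^{-1}}$ invertible, so $F_{g.x,y}$ and $F_{x,y}$ have equal rank and $V$ is $GL(\alpha)$-invariant. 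Since $Rep(\alpha)=\prod_{a\in Arr(Q)}\Hom(k^{\alpha_{s(a)}},k^{\alpha_{t(a)}})$ is an affine space, and therefore irreducible, the nonempty open $V$ is dense; the orbit $\mc O_\rho$ is open by the cited fact, hence also dense, so $V\cap\mc O_\rho\neq\emptyset$. Being $GL(\alpha)$-invariant, $V$ then contains the entire orbit $\mc O_\rho$, so $\rho\in V$, i.e. $F_{\rho,y}$ has maximal rank. Fixing instead $x$ and running the symmetric argument with the second identity in \eqref{action of g}, the $GL(\alpha')$-action, and the density of $\mc O_{\rho'}$ in $Rep(\alpha')$ yields that $F_{x,\rho'}$ has maximal rank.

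For the last assertion I would fix $a\in Arr(Q)$ and put $W_a=\{x\in Rep(\alpha): x_a\ \text{has maximal rank}\}$. The projection $x\mapsto x_a$ sends $Rep(\alpha)$ onto $\Hom(k^{\alpha_{s(a)}},k^{\alpha_{t(a)}})$, inside which the full-rank matrices form a dense open set, so $W_a$ is dense and open; and $(g.x)_a=g(t(a))\circ x_a\circ g(s(a))^{-1}$ from \eqref{left action} shows $W_a$ is $GL(\alpha)$-invariant. As before, the dense orbit $\mc O_\rho$ meets $W_a$ and is therefore contained in it, so $\rho_a$ has maximal rank; as $a$ is arbitrary this holds for every arrow, and the same reasoning applied to $\rho'$ handles $\rho'_a$.

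The argument is essentially formal once the openness of $\mc O_\rho$ is granted, and I expect the only point requiring care to be the invariance of the maximal-rank loci under the actions of \eqref{left action}. It is exactly this invariance --- supplied by \eqref{action of g} for $F_{x,y}$ and by the conjugation formula for $x_a$ --- that lets me pass from ``$F_{x,y}$ has maximal rank for some $(x,y)$'' to the statement about the specific exceptional representations, because a $GL(\alpha)$-invariant set meeting the orbit $\mc O_\rho$ must contain all of it. The only external input is the cited Zariski-openness of orbits of exceptional representations.
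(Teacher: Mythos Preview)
Your proof is correct and follows essentially the same approach as the paper: both combine the Zariski-openness of the maximal-rank locus, its invariance under the $GL(\alpha)$-action (which the paper isolates as the preceding lemma and you derive directly from \eqref{action of g}), and the Zariski-openness of the orbit of an exceptional representation inside the irreducible affine space $Rep(\alpha)$. Your formulation via ``dense open sets in an irreducible variety intersect'' is a slightly cleaner packaging of the paper's contrapositive argument, but the content is the same.
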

\bpr First recall that,  since $\rho,\rho'$ are exceptional,  the orbits  $\mc O_{\rho}$ and   $\mc O_{\rho'}$ are Zariski open in $Rep(\alpha)$ and $Rep(\alpha')$, respectively (see \cite[p. 13]{WCB2}). For  a  given $x \in  Rep(\alpha)$ the condition on $y\in Rep(\alpha')$ to be such that $F_{x,y}$ is not of maximal rank is expressed by vanishing of certain family of polynomials on $Rep(\alpha')$.    If there is $y\in Rep(\alpha')$ such that $F_{x,y}$ is  of maximal rank, then the  zero set of this family of polynomials is a proper Zariski closed subset of $Rep(\alpha')$, hence, by the previous lemma, not maximality of the rank of  $F_{x,\rho'}$ implies that the orbit $\mc O_{\rho'}$ is contained in this proper zariski closed subset, and then $\mc O_{\rho'}$ can not be an open subset of $Rep(\alpha')$. Thus, we showed that if $F_{x,y}$ is  of maximal rank  for some $y\in Rep(\alpha')$, then  $F_{x,\rho'}$ is of maximal rank. The claim about $F_{\rho,y}$   is proved by the same arguments applied to $\rho$. 

 Finally, the property that $\rho_a$ is not of maximal rank is invariant under the action of $GL(\alpha)$,  for any $a\in Arr(Q)$. It follows that  non-maximality of the rank of $\rho_a$ implies that $\mc O_{\rho}$ is contained in a proper Zariski closed subset of $Rep(\alpha)$. If $\rho$ is an exceptional representation, then  $\mc O_{\rho}$  is Zariski open in  $Rep(\alpha)$, therefore  $\rho_a$  is of maximal rank for each $a\in Arr(Q)$.
\epr

It is useful to  give a more precise description of  the map defined in Definition \ref{the map F_rho rho'}. For any $(\rho,\rho') \in Rep(\alpha)\times Rep(\alpha')$ we denote $A= \{i\in V(Q) : \alpha_i \neq 0\}$,  $A'= \{i\in V(Q) : \alpha'_i \neq 0\}$. We denote also $Arr(A,A')=\{a \in Arr(Q): s(a)\in A, t(a)\in A'\}$. Then for $F_{\rho,\rho'}$ we can write 

\begin{gather} \label{F_rho,rho'1} F_{\rho, \rho'} : \prod_{i\in A\cap A'} \Hom(k^{\alpha_i},k^{\alpha'_i}) \rightarrow \prod_{a\in Arr(A,A')} \Hom(k^{\alpha_{s(a)}}, k^{\alpha'_{t(a)}})  \\ 
\label{F_rho,rho'2} F_{\rho, \rho'}\left (\{f_i\}_{i\in A\cap A'} \right ) =\left \{  \begin{array}{cc}
f_{t(a)}\circ \rho_a - \rho'_a \circ f_{s(a)}  & a \in Arr(A\cap A',A\cap A')  \\
 - \rho'_a \circ f_{s(a)}  & a \in Arr(A\cap A',A' \setminus A)  \\
f_{t(a)}\circ \rho_a & a \in Arr(A \setminus A',A\cap A') \\
0 & a \in Arr(A \setminus A',A'\setminus A) \end{array}  \right \}. \end{gather}
In the rest sections we will study the question about maximality of the rank of $F_{\rho,\rho'}$, where $\rho$, $\rho'$ are exceptional representations. We prove first the following lemma:

\begin{lemma} \label{lemma with one simple representation} Let $Q$ have no edges loops.  Let $\rho,\rho'\in Rep_k(Q)$ be exceptional representations. If $A$ or $A'$ is a single element set, then $F_{\rho,\rho'}$ has maximal rank.
\end{lemma}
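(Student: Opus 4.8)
The plan is to reduce to the case where the single-element support belongs to $\rho$, and then to replace $\rho'$ by a conveniently chosen representative by means of Corollary \ref{main coro}.

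First I would dispose of the hypothesis ``$A'$ is a single element set'' by duality. By \eqref{vee1} the support of $\rho'^{\vee}$ in $Q^{\vee}$ equals $A'$, the objects $\rho^{\vee},\rho'^{\vee}$ are again exceptional, and $Q^{\vee}$ has no edge loops; so by Lemma \ref{lemma about F_rhi,rhi'}(e) it suffices to treat the case in which the \emph{first} argument has single-element support. Hence assume $A=\{v\}$. Since $Q$ has no loops, the coordinate vector $e_v$ satisfies $\langle e_v,e_v\rangle=1$, so the equality $\langle \ul{\dim}(\rho),\ul{\dim}(\rho)\rangle=1$ forces $\ul{\dim}(\rho)=e_v$; moreover every structure map of a representation of dimension vector $e_v$ is forced to be zero (any arrow incident to $v$ meets a $0$-dimensional space at its other end), so $Rep(e_v)=\{S_v\}$ is a single point and $\rho=S_v$ is the simple object at $v$.

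Next I would write out $F_{\rho,\rho'}=F_{S_v,\rho'}$ from \eqref{F_rho,rho'1}--\eqref{F_rho,rho'2}. If $v\notin A'$ then $A\cap A'=\emptyset$, the domain is the zero space, and $F_{S_v,\rho'}$ has maximal rank trivially. If $v\in A'$ then $A\cap A'=\{v\}$, and because there are no loops at $v$ the set $Arr(A\cap A',A\cap A')$ is empty, so no mixed term $f_{t(a)}\circ\rho_a-\rho'_a\circ f_{s(a)}$ occurs and $F_{S_v,\rho'}$ is the ``stacked'' map
\[
\rho'_v \longrightarrow \bigoplus_{a:\,s(a)=v}\rho'_{t(a)},\qquad w\longmapsto (-\rho'_a\,w)_a,
\]
where arrows with $t(a)\notin A'$ contribute a zero target and may be ignored. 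The decisive step is then Corollary \ref{main coro}: as $\alpha=e_v$ and $\alpha'=\ul{\dim}(\rho')$ are real roots and $\rho,\rho'$ are exceptional, it suffices to exhibit \emph{one} $y\in Rep(\alpha')$ for which $F_{S_v,y}$ has maximal rank, and the corollary (with first argument the unique point $S_v=\rho$) then upgrades this to $F_{S_v,\rho'}$. But $F_{S_v,y}$ is again the stacked map $w\mapsto(-y_a w)_a$, whose matrix ranges over \emph{all} of $\Hom\big(k^{\alpha'_v},\bigoplus_{a:\,s(a)=v,\,t(a)\in A'}k^{\alpha'_{t(a)}}\big)$ as the blocks $y_a$ vary. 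Choosing those blocks so that the assembled map has maximal rank (a maximal-rank element of a Hom-space of matrices always exists) and filling in the remaining structure maps of $y$ arbitrarily produces the desired $y$.

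I do not expect a serious obstacle: the real content is the reduction, through Corollary \ref{main coro}, of a representation-theoretic statement to the trivial linear-algebra fact that a space of rectangular matrices contains one of maximal rank. The only points requiring care are the bookkeeping in identifying $F_{S_v,\rho'}$ with a stacked map---this is exactly where the ``no edge loops'' hypothesis is used, both to exclude the imaginary-root possibility for $\ul{\dim}(\rho)$ and to kill the mixed differential terms---and the clean passage from the $A'$ case to the $A$ case via Lemma \ref{lemma about F_rhi,rhi'}(e).
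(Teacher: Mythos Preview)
Your proposal is correct and follows essentially the same route as the paper: reduce to the case $A=\{v\}$ via duality (Lemma \ref{lemma about F_rhi,rhi'}(e)), observe $\alpha_v=1$, identify $F_{\rho,y}$ with the stacked map $f\mapsto(-y_a\circ f)_a$, choose $y$ making this of maximal rank, and invoke Corollary \ref{main coro}. The only differences are cosmetic---you handle the duality first rather than last, and you spell out why $\alpha_v=1$ (which the paper simply asserts).
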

\begin{proof} Recall that we denote $\alpha = \ul{\dim}(\rho)$, $\alpha' = \ul{\dim}(\rho')$. 

 Assume that $A=\{j\}$. If $A \cap A' = \emptyset$, then $F_{\rho,\rho'}$ is injective. Let $A\cap A' =\{j\}$. 

Now $A\setminus A' =\emptyset$ and, since  $Q$ has no edges loops, we have $Arr(A\cap A', A\cap A')=\emptyset$,  hence for any $y\in Rep(\alpha')$ the map $F_{\rho,y}$ has the form (we use \eqref{F_rho,rho'1}, \eqref{F_rho,rho'2} and that now $\alpha_j=1$):
\begin{gather}  F_{\rho, y} :  \Hom(k,k^{\alpha'_j}) \rightarrow \prod_{a\in  Arr(\{j\},A' \setminus \{j\})  } \Hom(k, k^{\alpha'_{t(a)}}) \nonumber  \\ 
F_{\rho, y}\left (f\right ) =\{ - y_a \circ f  \}_{ a \in Arr(\{j\},A' \setminus \{j\}) }\nonumber  \end{gather} 
From this description of $  F_{\rho, y}$ we see that  we can choose  $y\in Rep(\alpha')$  so that  $F_{\rho, y}$ has maximal rank. Therefore by Corollary \ref{main coro}    $F_{\rho, \rho'}$ has maximal rank as well.  

If $A'=\{j\}$, then by the already proved $F^{Q^{\vee}}_{\rho'^{\vee}, \rho^{\vee}}$ has a maximal rank. Now we apply Lemma \ref{lemma about F_rhi,rhi'}, (e) and obtain that  $F_{\rho, \rho'}$  has maximal rank.
\end{proof}
An example of a quiver $Q$ and exceptional representations $\rho$, $\rho'$, s. t. $F^{Q}_{\rho,\rho'}$ is not of maximal rank is as follows: \begin{gather} \label{non maximal rank} Q=\begin{diagram}[1.5em]
 \circ &  \rTo  &  \circ    \\
  \uTo &        & \dTo     \\
\circ   & \rTo  &    \circ
\end{diagram} \quad \rho= \begin{diagram}[1.5em]
 k &  \rTo  &  0    \\
  \uTo &        & \dTo     \\
k   & \rTo  &    k
\end{diagram} \quad \rho'= \begin{diagram}[1.5em]
 0 &  \rTo  &  k    \\
  \uTo &        & \dTo     \\
k   & \rTo  &    k
\end{diagram}.    \end{gather}

One easily computes $\hom(\rho,\rho')=1$, $\langle \ul{\dim}(\rho), \ul{\dim}(\rho') \rangle=0$, and hence $\hom^1(\rho,\rho')=1$. Furthermore, $\rho$, $\rho'$ are exceptional representations.   Now from Lemma \ref{lemma about F_rhi,rhi'} (c) it follows that $F_{\rho,\rho'}$ is not of maximal rank.  Comparing with Corollary \ref{RP property 1,2 and.. for Q1} (b) we obtain 
\begin{lemma} \label{not equivalent} The categories of representations and their bounded derived categories  of the quivers  $ \begin{diagram}[1em]
 \circ &  \rTo  &  \circ    \\
  \uTo &        & \dTo     \\
\circ   & \rTo  &    \circ
\end{diagram}, \ \  \begin{diagram}[1em]
 \circ &  \rTo  &  \circ    \\
  \uTo &        & \uTo     \\
\circ   & \rTo  &    \circ
\end{diagram}  $ are not equivalent. 
\end{lemma}

In the next section we restrict our considerations to a quiver $Q$ without loops. 

\section{Remarks about  \texorpdfstring{$F_{\rho,\rho'}$}{\space} in  quivers without loops} \label{without loops}
Throughout     this section $Q$ is quiver without loops (i. e. the underlying graph $\Gamma(Q)$ is simply connected),  in particular  there is at most one edge between any two vertices of $Q$.
Here  we consider exceptional representations whose dimension vectors take values in $\{0,1\}$. These  exceptional representations are said to have thin dimension vector (Definition \ref{thin}). The main result of this section is that, when the graph of $Q$ has no loops, then for any two exceptional representations  $\rho$, $\rho'$  with thin dimension vectors the linear map $F_{\rho,\rho'}^Q$  has maximal rank. The last Lemma \ref{about A cap A' single element}  of this section considers some  cases in which $A\cap A'$ is a single element set
 and where $\rho$, $\rho'$ are not restricted to be with thin dimension vectors.

 For any subset $X\subset V(Q)$ we denote by $Q_X$ the quiver with $V(Q_X)=X$ and $Arr(Q_X)=Arr(X,X)$.
We denote by $\rho$, $\rho'$ two exceptional  representations of $Q$. We denote by $\alpha, \alpha' \in \NN^{V(Q)}$ their dimension vectors, and by $A=supp(\alpha)\subset V(Q)$, $A'=supp(\alpha')\subset V(Q)$ the supports of $\alpha, \alpha'$.  If $Arr(A\setminus A', A'\setminus A) \not = \emptyset$, then by the simply-connectivity of $Q$ it follows that $A\cap A' = \emptyset$ and then  $F_{\rho,\rho'}$ is  trivially injective(see \eqref{F_rho,rho'1}).\footnote{Note that since $\rho, \rho'$ are indecomposable, it follows that  $Q_A$,  $Q_{A'}$ are connected.} Thus, we see
\begin{lemma} \label{lemma about A setminus A'} If $Arr(A\setminus A', A'\setminus A) \not = \emptyset$, then $F_{\rho,\rho'}$ has  maximal rank. \end{lemma}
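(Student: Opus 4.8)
The plan is to reduce the statement to the single combinatorial fact that $A\cap A'=\emptyset$. Indeed, by the description \eqref{F_rho,rho'1} the domain of $F_{\rho,\rho'}$ is $\prod_{i\in A\cap A'}\Hom(k^{\alpha_i},k^{\alpha'_i})$; once we know $A\cap A'=\emptyset$ this product is the zero vector space, so $F_{\rho,\rho'}$ is the unique map out of $\{0\}$. Such a map is vacuously injective, and its rank equals the dimension of its domain, which is the largest value a rank can take; hence $F_{\rho,\rho'}$ has maximal rank. So everything comes down to proving $A\cap A'=\emptyset$ under the hypothesis $Arr(A\setminus A',A'\setminus A)\neq\emptyset$.

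To prove the latter I would first record that the supports $A$ and $A'$ are connected subgraphs of $\Gamma(Q)$, i.e. that $Q_A$ and $Q_{A'}$ are connected. This is where exceptionality enters: an exceptional representation has $\Hom(\rho,\rho)=k$, hence is indecomposable, and an indecomposable representation cannot have disconnected support (otherwise it would split as the direct sum of its restrictions to the connected components of its support). I would also use the defining property of a tree: since $\Gamma(Q)$ has no loops (is simply connected), every edge is a bridge, i.e. deleting it disconnects its two endpoints, equivalently there is a unique path between any two vertices.

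Now fix an arrow $a\in Arr(A\setminus A',A'\setminus A)$ and set $u=s(a)\in A\setminus A'$, $v=t(a)\in A'\setminus A$, so the underlying edge of $a$ joins $u$ and $v$. Arguing by contradiction, suppose there is a vertex $w\in A\cap A'$. By connectedness of $Q_A$ choose a path $P$ from $u$ to $w$ lying entirely in $A$; since $v\notin A$ this path never visits $v$, hence never traverses the edge of $a$. Symmetrically, by connectedness of $Q_{A'}$ choose a path $P'$ from $v$ to $w$ lying in $A'$; since $u\notin A'$ it never visits $u$ and so never traverses $a$. Concatenating $P$ with the reverse of $P'$ yields a walk from $u$ to $v$ in $\Gamma(Q)$ that does not use the edge of $a$. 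This contradicts the fact that $a$ is a bridge of the tree $\Gamma(Q)$. Therefore $A\cap A'=\emptyset$, completing the argument.

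I expect the only real subtlety to be the appeal to connectedness of the supports: the conclusion $A\cap A'=\emptyset$ genuinely fails for arbitrary (decomposable) representations — for instance on the path quiver $1\rightarrow 2\rightarrow 3$ the choices $A=\{1,3\}$, $A'=\{2,3\}$ have a crossing arrow $1\rightarrow 2$ yet meet in $\{3\}$ — so the indecomposability forced by exceptionality cannot be dropped. The remainder is the standard tree/bridge manipulation above.
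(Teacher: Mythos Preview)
Your argument is correct and follows the same route as the paper, which in one line says that simple connectivity of $\Gamma(Q)$ forces $A\cap A'=\emptyset$, whence $F_{\rho,\rho'}$ is trivially injective. You have made explicit the ingredient the paper leaves tacit: connectedness of the supports $Q_A$ and $Q_{A'}$, which you obtain from indecomposability of the exceptional objects; your counterexample on $1\to 2\to 3$ shows this ingredient is genuinely needed, so your write-up is in fact a bit more careful than the paper's.
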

\textbf{From now on we assume that  $Arr(A\setminus A', A'\setminus A)  = \emptyset$,  and then the last row in \eqref{F_rho,rho'2} can be erased, and we have a disjoint union:}
\begin{gather} \label{Arr(A,A')} Arr(A,A')=  Arr(A\cap A',A\cap A')  \cup Arr(A\cap A',A' \setminus A) \cup Arr(A \setminus A',A\cap A'). \end{gather}

Now we  consider exceptional representations $\rho,\rho'$ whose dimension vectors contain only units and zeroes.   More precisely:
\begin{df} \label{thin} A vector $\alpha \in \NN^{V(Q)}$ is said to be thin if  for any $i\in A$ we have   $ \alpha_i=1$, where  $A=supp(\alpha)\subset V(Q)$ is the support of $\alpha$. 
\end{df}
\begin{remark} \label{thin remark} If  $\rho \in Rep_k(Q)$ is an exceptional representation with  a thin dimension vector (thin exceptional representation),  then the sub-quiver  $Q_A$ mist be connected  and  one can assume that  $\forall a \in Arr(A,A)\ \ \ \rho_a ={\rm Id}_k$. 
\end{remark}
\begin{lemma} \label{lemma with units} Let $\rho$ and $\rho'$ be exceptional representations with thin  dimension vectors.	
	Then $F_{\rho,\rho'}$ has maximal rank.
\end{lemma}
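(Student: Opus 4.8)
The plan is to normalise the two representations so that $F_{\rho,\rho'}$ becomes an explicit signed incidence matrix, and then to read off its rank from the tree structure of $\Gamma(Q)$. Under the standing assumption $Arr(A\setminus A',A'\setminus A)=\emptyset$ I would use the preceding Remark to take $\rho_a=\mathrm{Id}_k$ for every $a\in Arr(A,A)$ and $\rho'_a=\mathrm{Id}_k$ for every $a\in Arr(A',A')$; this is harmless because by \eqref{action of g} the rank of $F_{\rho,\rho'}$ depends only on the isomorphism classes of $\rho,\rho'$. Since $\alpha,\alpha'$ are thin, each factor in \eqref{F_rho,rho'1} is one-dimensional, so the domain is $k^{A\cap A'}$, the codomain is $k^{Arr(A,A')}$, and feeding the identities into \eqref{F_rho,rho'2} turns $F_{\rho,\rho'}$ into the $\{0,\pm1\}$-matrix whose row attached to an arrow $a$ is $f_{t(a)}-f_{s(a)}$ when $a\in Arr(A\cap A',A\cap A')$, is $-f_{s(a)}$ when $a\in Arr(A\cap A',A'\setminus A)$, and is $f_{t(a)}$ when $a\in Arr(A\setminus A',A\cap A')$. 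I then must show this matrix has rank $\min(|A\cap A'|,|Arr(A,A')|)$.

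The decisive input is the shape of $B:=A\cap A'$. Because $\Gamma(Q)$ is a tree (being connected and simply connected), and the supports $A,A'$ of the two thin exceptional representations are connected subtrees, the intersection of two subtrees of a tree is again connected: for $u,v\in B$ the unique $u$--$v$ path lies in $A$ and in $A'$, hence in $B$. Thus (unless $B=\emptyset$, where the domain is $0$ and there is nothing to prove) $Q_B$ is a single subtree, so $|Arr(A\cap A',A\cap A')|=|B|-1$.

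With this in hand the finish is a short dichotomy obtained from the kernel. An element of $\ker F_{\rho,\rho'}$ is forced by the internal rows to be constant on the connected set $B$, and is forced to be $0$ at any vertex that is the source of an arrow of $Arr(A\cap A',A'\setminus A)$ or the target of an arrow of $Arr(A\setminus A',A\cap A')$. Hence $\dim\ker F_{\rho,\rho'}$ is $1$ if no such ``boundary'' arrow exists and $0$ otherwise. In the first case \eqref{Arr(A,A')} gives $Arr(A,A')=Arr(A\cap A',A\cap A')$ of size $|B|-1$, and the rank is $|B|-1=\min(|B|,|B|-1)$; in the second case the rank is $|B|$, while $|Arr(A,A')|=(|B|-1)+(\#\text{boundary arrows})\geq|B|$, so again the rank equals $\min(|B|,|Arr(A,A')|)$. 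Either way the rank is maximal.

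The step I expect to be the crux is the connectivity of $B$. If $B$ could split into several components, a component carrying two or more boundary arrows would make $|Arr(A,A')|$ exceed the rank while the kernel stays nontrivial, and maximality would fail---this is precisely what happens in \eqref{non maximal rank}, where $\Gamma(Q)$ is not a tree. So the tree hypothesis is used twice and essentially: to pin down $|Arr(A\cap A',A\cap A')|=|B|-1$ and to keep $B$ a single component.
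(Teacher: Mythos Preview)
Your argument is correct and follows essentially the same route as the paper: normalise to the identity so that $F_{\rho,\rho'}$ becomes a signed incidence map, use the tree structure of $\Gamma(Q)$ to get that $B=A\cap A'$ is a connected subtree with $|Arr(B,B)|=|B|-1$, and then finish by a kernel/rank count split on the presence of ``boundary'' arrows. The only cosmetic difference is that the paper packages the kernel analysis into a separate ``Simple Lemma'' (proved by induction on a tree) and organises the dichotomy by the sign of $\langle\alpha,\alpha'\rangle$ (equivalently, $\leq 1$ versus $\geq 2$ boundary arrows), whereas you compute the kernel directly and split on $0$ versus $\geq 1$ boundary arrows; both splits yield the same conclusion.
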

\bpr Due to the given conditions we can write:
\begin{gather} \label{F_rho,rho'10} \langle \ul{\dim}(\rho), \ul{\dim}(\rho')\rangle = \# (A\cap A')- \#(Arr(A,A')) \\ 
\label{F_rho,rho'11} \bd \prod_{i\in A\cap A'} k & \rTo^{ F_{\rho, \rho'}} & \prod_{a\in Arr(A,A')} k \ed \qquad  F_{\rho, \rho'}\left (\{f_i\}_{i\in A\cap A'} \right ) =\left \{  \begin{array}{cc}
f_{t(a)}-  f_{s(a)}  & a \in Arr(A\cap A',A\cap A')  \\
 -  f_{s(a)}  & a \in Arr(A\cap A',A' \setminus A)  \\
f_{t(a)} & a \in Arr(A \setminus A',A\cap A') \end{array}  \right \}. \nonumber \end{gather}
We can assume that $A\cap A' \not = \emptyset$. Since $Q_A$ and $Q_{A'}$ are connected, it follows that $Q_{A\cap A'}=Q_A \cap Q_{A'}$ is connected. Since there are no loops in $\Gamma(Q)$,  the graph of $Q_{A\cap A'}$ is simply connected, therefore 
$  \#(A\cap A')= \#(Arr(A\cap A', A\cap A')) + 1$. Putting \eqref{Arr(A,A')} and the latter equality  in \eqref{F_rho,rho'10}  we obtain
\begin{gather} \label{euler in the case of units} \langle \ul{\dim}(\rho), \ul{\dim}(\rho')\rangle = 1-\#( Arr(A\cap A', A'\setminus A))-\#( Arr(A\setminus A', A\cap A')). \end{gather}
The following lemma will be helpful for the rest of the proof
\begin{lemma} \label{simple lemma} Let $T$ be a  quiver, s. t. $\Gamma(T)$ is simply-connected.   Consider the linear map 
\begin{gather} \label{tree map1} F: \prod_{i\in V(T)} k \rightarrow \prod_{a\in Arr(T)} k \qquad  \qquad  
 F\left (\{f_i\}_{i\in V(T)} \right ) =\left \{  
f_{t(a)}-  f_{s(a)} \right \}_{  a \in Arr(T) } \end{gather} 
For each $j\in V(T)$, each $x\in k$, and each $y\in \prod_{a\in Arr(T)} k$ there exists unique $\{f_i\}_{i\in V(T)} \in  \prod_{i\in V(T)} k $ with $f_j=x$ and  $F(\{f_i\}_{i\in V(T)} )=y$. In particular, for each  $j\in V(T)$ the linear map 
\begin{gather} \label{tree map2}  \prod_{i\in V(T)} k \rightarrow k \oplus \prod_{a\in Arr(T)} k \qquad  \qquad  
 \{f_i\}_{i\in V(T)}  \mapsto \left (f_j,  \left \{  
f_{t(a)}-  f_{s(a)} \right \}_{  a \in Arr(T) } \right ) \end{gather} 
is isomorphism. 
\end{lemma}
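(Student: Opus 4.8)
The plan is to prove the first (and main) assertion by induction on $\#(V(T))$, peeling off leaves of the tree $\Gamma(T)$; the ``in particular'' clause is then immediate. Indeed, the linear map \eqref{tree map2} sends $\{f_i\}_{i\in V(T)}$ to the pair $(f_j, F(\{f_i\}_{i\in V(T)}))$, and the first assertion says precisely that each target $(x,y)$ has exactly one preimage, so \eqref{tree map2} is a bijective linear map, hence an isomorphism. This is consistent with a dimension count: the domain has dimension $\#(V(T))$ while the codomain has dimension $1+\#(Arr(T)) = 1 + (\#(V(T))-1)$, since a finite tree on $\#(V(T))$ vertices has exactly $\#(V(T))-1$ edges.

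For the base case $\#(V(T))=1$ there are no arrows (simple-connectivity forbids loops), so $\prod_{a\in Arr(T)} k$ is the zero space, $y$ is its only element, and $f_j=x$ is the unique solution. For the inductive step, assume $\#(V(T))=n\geq 2$. A finite tree with at least two vertices has at least two leaves, so I can choose a leaf $\ell$ with $\ell\neq j$; let $a_0$ be the unique arrow incident to $\ell$ and let $m$ be its other endpoint. Removing $\ell$ together with $a_0$ produces a quiver $T'$ whose underlying graph is again simply connected, with $j,m\in V(T')$ and $\#(V(T'))=n-1$. Applying the inductive hypothesis to $T'$, to the same vertex $j$, the same scalar $x$, and the restriction of $y$ to $Arr(T')$, I obtain a unique family $\{f_i\}_{i\in V(T')}$ with $f_j=x$ satisfying every edge equation of $T'$.

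It then remains to fix $f_\ell$ using the single leftover equation, the one attached to $a_0$. If $s(a_0)=\ell$ and $t(a_0)=m$, this equation reads $f_m-f_\ell=y_{a_0}$, forcing $f_\ell=f_m-y_{a_0}$; if instead $s(a_0)=m$ and $t(a_0)=\ell$, it reads $f_\ell-f_m=y_{a_0}$, forcing $f_\ell=f_m+y_{a_0}$. Either way $f_\ell$ is uniquely determined, yielding the unique family $\{f_i\}_{i\in V(T)}$ with $f_j=x$ and $F(\{f_i\}_{i\in V(T)})=y$, which closes the induction.

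Since this is an elementary statement, I do not expect a genuine obstacle; the only points needing a little care are the bookkeeping items, all of which rely on $\Gamma(T)$ being a tree: that a tree with at least two vertices always has a leaf distinct from the prescribed $j$ (so that $T'$ still contains $j$), that deleting a leaf preserves simple-connectivity so the inductive hypothesis applies, and that the two orientations of $a_0$ are treated symmetrically. An alternative route bypasses induction entirely: since $\Gamma(T)$ is a tree, every vertex $i$ is joined to $j$ by a unique path, and one may define $f_i:=x+\sum\pm y_a$ over the arrows of that path, with sign $+$ when the arrow is traversed along its orientation and $-$ otherwise. Uniqueness of paths in a tree makes this well defined, and for any arrow $a$ the path to $t(a)$ is the path to $s(a)$ extended by $a$, which shows $f_{t(a)}-f_{s(a)}=y_a$; this gives both existence and uniqueness at once.
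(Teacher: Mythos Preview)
Your proof is correct and follows exactly the approach the paper indicates (``Easy induction on the number of vertices''), simply supplying the details of peeling off a leaf distinct from $j$. The alternative path-sum argument you sketch is also fine and equivalent in spirit.
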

\bpr  Easy induction on the number of vertices.  \epr
We will apply this lemma to  $Q_{A\cap A'}$.

Consider first the case $\langle \ul{\dim}(\rho), \ul{\dim}(\rho')\rangle \geq 0$. We need to show that $F_{\rho,\rho'}$ is surjective. Now by \eqref{euler in the case of units}  we have $1\geq Arr(A\cap A', A'\setminus A)+ Arr(A\setminus A', A\cap A')$,  and then the map $F_{\rho,\rho'}$ is the same as one of the maps \eqref{tree map1} or \eqref{tree map2} corresponding to $T=Q_{A\cap A'}$, hence $F_{\rho,\rho'}$  is surjective.

In the case $\langle \ul{\dim}(\rho), \ul{\dim}(\rho')\rangle < 0$,  we have $1< Arr(A\cap A', A'\setminus A)+ Arr(A\setminus A', A\cap A')$. Hence, for some projection $\pi$  the map $\pi \circ F_{\rho,\rho'}$ is the same as   the map \eqref{tree map2} corresponding to $T=Q_{A\cap A'}$, hence $F_{\rho,\rho'}$  is injective.
\epr
In the end we consider the map $F_{\rho,\rho'}$ in the case, when $A\cap A' $ has a single element.
\begin{lemma}  \label{about A cap A' single element} Let $\rho, \rho'$ be exceptional representations, s. t. $A\cap A' =\{j\}$. Let $\Gamma(Q)$ does not split at $j$, i. e. the edges adjacent to $j$ can be represented as follows $\bd[1em]  x & \lLine & j & \rLine & y \ed$ . Finally, assume that  $\alpha$ is constant  on $A\cap \{x,y,j\}$   or $\alpha'$ is constant on $A'\cap \{x,y,j\}$. Then $F_{\rho,\rho'}$ has maximal rank.
\end{lemma}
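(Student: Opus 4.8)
The plan is to reduce, via Corollary \ref{main coro}, to constructing a single pair of representations of the prescribed dimension vectors on which the differential attains maximal rank, and then to handle the local configuration at $j$ by cases. Write $p=\alpha_j$ and $q=\alpha'_j$, so that by \eqref{F_rho,rho'1} the domain of $F_{\rho,\rho'}$ is $\Hom(k^{p},k^{q})$ (there is no loop at $j$, hence $Arr(A\cap A',A\cap A')=\emptyset$). Since $\Gamma(Q)$ does not split at $j$, every arrow in $Arr(A,A')$ lies on one of the two edges $j-x$, $j-y$, and by \eqref{F_rho,rho'2} such an edge contributes to the codomain in exactly one of two ways: by \emph{post-composition} $f\mapsto -\rho'_a\circ f$ when the arrow is $j\to v$ with $v\in A'\setminus A$, or by \emph{pre-composition} $f\mapsto f\circ\rho_a$ when the arrow is $v\to j$ with $v\in A\setminus A'$ (here $v\in\{x,y\}$); in all remaining configurations the edge contributes nothing. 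By Corollary \ref{main coro} it suffices to produce $x_0\in Rep(\alpha)$, $y_0\in Rep(\alpha')$ with $F_{x_0,y_0}$ of maximal rank: maximality then transfers first to $F_{\rho,y_0}$ and, applying the corollary once more to the exceptional first argument $\rho$, to $F_{\rho,\rho'}$.

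Now split according to how the (at most two) contributing edges are distributed. If at most one edge contributes, or if both contribute on the same side, then for the witness $F_{x_0,y_0}$ is, up to sign, either $f\mapsto Mf$ or $f\mapsto fN$, where $M$ (respectively $N$) is the matrix formed from the free cross-blocks $\{\rho'_a\}$ of $y_0$ (respectively $\{\rho_a\}$ of $x_0$). Because these blocks may be chosen arbitrarily, $M$ (respectively $N$) can be taken of maximal rank; then $f\mapsto Mf$ (respectively $f\mapsto fN$) has rank $p\cdot\mathrm{rk}(M)$ (respectively $q\cdot\mathrm{rk}(N)$), which is the maximal possible rank of $F_{x_0,y_0}$. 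No constancy hypothesis is needed here.

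The crux is the remaining \emph{mixed} case, where one edge contributes by post-composition, say $a\colon j\to x$ with $x\in A'\setminus A$, and the other by pre-composition, say $b\colon y\to j$ with $y\in A\setminus A'$. Here $F_{x_0,y_0}(f)=(\rho'_a\circ f,\;f\circ\rho_b)$, and a direct analysis of the kernel (an $f$ lies in it iff $\im f\subseteq\ker\rho'_a$ and $\im\rho_b\subseteq\ker f$) gives $\dim\ker F_{x_0,y_0}=(p-\mathrm{rk}\,\rho_b)(q-\mathrm{rk}\,\rho'_a)$. Choosing $\rho_b,\rho'_a$ of maximal rank this equals $(p-\alpha_y)_+\,(q-\alpha'_x)_+$, where $(\,\cdot\,)_+=\max(0,\,\cdot\,)$, and this vanishes precisely when $\alpha_y\ge p$ or $\alpha'_x\ge q$. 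This is exactly what the constancy hypothesis supplies: here $A\cap\{x,y,j\}=\{y,j\}$ and $A'\cap\{x,y,j\}=\{x,j\}$, so constancy of $\alpha$ forces $\alpha_y=\alpha_j=p$, while constancy of $\alpha'$ forces $\alpha'_x=\alpha'_j=q$. In either case $F_{x_0,y_0}$ is injective; since its codomain $\Hom(k^{p},k^{\alpha'_x})\oplus\Hom(k^{\alpha_y},k^{q})$ has dimension $p\alpha'_x+q\alpha_y\ge pq$, injectivity is maximal rank, and Corollary \ref{main coro} concludes.

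I expect the mixed case to be the only real obstacle: it is the single configuration in which $\mathrm{rk}(F)$ genuinely depends on the \emph{relative} position of the two arrow maps rather than on their individual ranks, so one cannot conclude from Corollary \ref{main coro} and the maximality of each $\rho_a,\rho'_a$ alone. The constancy hypothesis is precisely the condition under which the generic relative position achieves injectivity; it is also sharp, since otherwise $(p-\alpha_y)_+(q-\alpha'_x)_+>0$ and no choice of maps makes $F_{x_0,y_0}$ of maximal rank.
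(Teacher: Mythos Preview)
Your argument is correct and follows essentially the same route as the paper: both proofs reduce to a local analysis at the vertex $j$, split into cases according to which of the two adjacent edges contribute to $Arr(A,A')$, and in the mixed case use the constancy hypothesis to force one of the two arrow maps to be an isomorphism, making $F$ injective.

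Two minor differences in organization are worth noting. First, the paper invokes Lemma~\ref{lemma with one simple representation} to assume $\#(A),\#(A')\geq 2$ up front; this forces exactly one of $x,y$ into $A\setminus A'$ and the other into $A'\setminus A$, so the ``both edges on the same side'' case never arises and the remaining case split is simply by the orientation of the two arrows. Your classification by contribution type covers this extra case directly, which is fine but slightly redundant. Second, the paper does not construct witness representations $x_0,y_0$ at all: it uses the last clause of Corollary~\ref{main coro} to conclude that the actual maps $\rho_a,\rho'_a$ already have maximal rank, and then argues directly with $F_{\rho,\rho'}$. Your two-step transfer via witnesses is valid but unnecessary here, since the only property of the witness you use is that its arrow maps have maximal rank, and the exceptional representations themselves already enjoy this.

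Your closing sharpness remark is also correct: the kernel formula $(p-\mathrm{rk}\,\rho_b)(q-\mathrm{rk}\,\rho'_a)$ holds for \emph{every} pair in $Rep(\alpha)\times Rep(\alpha')$, so without constancy no pair (in particular not $(\rho,\rho')$) can achieve maximal rank in the mixed configuration.
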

\bpr Since there are no loops in $\Gamma(Q)$, we have $Arr(A\cap A', A\cap A')=\emptyset$ and  $F_{\rho, \rho'} $ has the form
\begin{gather} \label{langle rangle} \langle \ul{\dim}(\rho),\ul{\dim}(\rho') \rangle = \alpha_j \alpha'_j- \sum_{a\in Arr(\{j\},A' \setminus A)} \alpha_j \alpha'_{t(a)} -  \sum_{a\in Arr(A \setminus A',\{j\})} \alpha_{s(a)} \alpha'_{j}\\   \label{F_rho,rho'1xx} F_{\rho, \rho'} :  \Hom(k^{\alpha_j},k^{\alpha'_j}) \rightarrow \prod_{a\in Arr(\{j\},A' \setminus A) } \Hom(k^{\alpha_{j}}, k^{\alpha'_{t(a)}}) \oplus\prod_{a\in Arr(A \setminus A',\{j\}) } \Hom(k^{\alpha_{s(a)}}, k^{\alpha'_{j}})  \\ 
\label{F_rho,rho'2xx} F_{\rho, \rho'}\left (\{f\} \right ) =\left \{  \begin{array}{cc}
 - \rho'_a \circ f  & a \in Arr(\{j\},A' \setminus A)  \\
f \circ \rho_a & a \in Arr(A \setminus A',\{j\})  \end{array}  \right \}. \end{gather}
From Lemma \ref{lemma with one simple representation} we can assume that $\#(A)\geq 2$,  $\#(A')\geq 2$. Since $\rho, \rho'$ are exceptional representations, $Q_A$ and $Q_{A'}$ are connected. Then the  edges adjacent to $j$ can be represented as follows $\bd A\setminus A' \ni x & \lLine & j & \rLine & y \in A'\setminus A \ed $ . We consider three cases.

If $Arr(\{j\},A' \setminus A)\neq \emptyset$,  $Arr(A \setminus A',\{j\})= \emptyset$, then we can represent the  arrows adjacent to $j$ as follows 
\be \bd A\setminus A' \ni x & \lTo & j & \rTo^a & y \in A'\setminus A \ed \ee and  $ \langle \ul{\dim}(\rho),\ul{\dim}(\rho') \rangle = \alpha_j \alpha'_j-\alpha_j \alpha'_{y}=\alpha_j (\alpha'_j- \alpha'_{y}) $, $F_{\rho, \rho'}\left (\{f\} \right )=-\rho'_a \circ f$. Since $\rho'$ is an exceptional representation, the map $\rho'_a$ has maximal rank (see the last part of Corollary \ref{main coro} ).  Therefore $F_{\rho, \rho'}$ has maximal rank. 

If $Arr(\{j\},A' \setminus A)= \emptyset$,  $Arr(A \setminus A',\{j\})\neq  \emptyset$, then we can represent the  arrows adjacent to $j$ as follows 
\be \bd A\setminus A' \ni x & \rTo^a & j & \lTo & y \in A'\setminus A \ed \ee and  $ \langle \ul{\dim}(\rho),\ul{\dim}(\rho') \rangle = \alpha_j \alpha'_j-\alpha_x \alpha'_{j}=(\alpha_j -\alpha_x) \alpha'_j $, $F_{\rho, \rho'}\left (\{f\} \right )=f \circ \rho_a $. Since $\rho$ is an exceptional representation, then $\rho_a$ has maximal rank.  Therefore $F_{\rho, \rho'}$ has maximal rank. 

If $Arr(\{j\},A' \setminus A)\neq \emptyset$,  $Arr(A \setminus A',\{j\})\neq  \emptyset$, then we can represent the  arrows adjacent to $j$ as follows 
\be \bd A\setminus A' \ni x & \rTo^a & j & \rTo^b & y \in A'\setminus A \ed \ee and  $ \langle \ul{\dim}(\rho),\ul{\dim}(\rho') \rangle = \alpha_j \alpha'_j-\alpha_x \alpha'_{j}-\alpha_j \alpha'_{y}$, $F_{\rho, \rho'}\left (\{f\} \right )=(-\rho'_b\circ f,f \circ \rho_a )$. 

If $\alpha$ is constant on $A\cap \{x,y,j\}$, then $\alpha_x=\alpha_j$ and $\langle \ul{\dim}(\rho),\ul{\dim}(\rho') \rangle=-\alpha_j \alpha'_{y} <0$. Since $\alpha_x=\alpha_j$  and $\rho_a$ has maximal rank, it follows that $\rho_a$ is isomorphism, hence $F_{\rho, \rho'}$  is injective.  Therefore $F_{\rho, \rho'}$ has maximal rank.

If $\alpha'$ is constant on $A'\cap \{x,y,j\}$, then $\alpha'_y=\alpha'_j$ and $\langle \ul{\dim}(\rho),\ul{\dim}(\rho') \rangle=-\alpha'_j \alpha_{x} <0$. Since  $\alpha'_y=\alpha'_j$  and $\rho'_b$ has maximal rank, it follows that $\rho'_b$ is isomorphism, hence $F_{\rho, \rho'}$  is injective.  Therefore $F_{\rho, \rho'}$ has maximal rank. The lemma is completely proved.
\epr

\section{Remarks about \texorpdfstring{$F_{\rho,\rho'}$}{\space} in  star shaped quivers} \label{star shaped}
In this section \ref{star shaped} we restrict $Q$ further. We assume that its graph is of the  type($m,n,p\geq2$):

\be \label{star shaped quiver} \Gamma(Q) = \begin{diagram}[size=1em]
    &        &     &        &        &        &     &         &           &           &       &           &  v_1  \\
    &        &     &        &        &        &     &         &           &           &       &   \ruLine &      \\
    &        &     &        &        &        &     &         &           &           &  v_2  &           &      \\
    &        &     &        &        &        &     &         &           & \iddots   &       &           &      \\
	  &        &     &        &        &        &     &         &  v_{n-1}  &           &       &           &      \\
    &        &     &        &        &        &     & \ruLine &           &           &       &           &      \\
u_1 & \rLine & u_2 & \cdots & u_{m-1}& \rLine & s   &         &           &           &       &           &      \\
    &        &     &        &        &        &     & \rdLine &           &           &       &           &      \\
    &        &     &        &        &        &     &         &  w_{p-1}  &           &       &           &      \\		
    &        &     &        &        &        &     &         &           &  \ddots   &       &           &      \\		
    &        &     &        &        &        &     &         &           &           &   w_2 &           &      \\	
    &        &     &        &        &        &     &         &           &           &       &  \rdLine  &      \\	
    &        &     &        &        &        &     &         &           &           &       &           &  w_1								
\end{diagram}
\ee
 For  simplicity of the notations we work with three rays. However Proposition \ref{main coro sect 5} holds for star shaped quivers with  any number of rais bigger than three.
 For such  quivers we consider  exceptional representations with hill dimension vector (Definition \ref{hill}) in addition to the already considered thin dimension vectors.  We show that for any two exceptional representations $\rho, \rho' \in Rep_k(Q)$, whose dimension vectors are hill or thin but not both hill, the map $F_{\rho,\rho'}^Q$ has maximal rank.  
The following lemma (more precisely  parts (a) and (b) in this lemma) is the first step to show this.
\begin{lemma} \label{big lemma} Let $L$ be a quiver whose  vertices are the numbers $\{1,2,\dots,n\}$($n\geq 2$), whose graph is  $\Gamma(L)=\bd[1em] 1 & \rLine & 2 &\rLine & \dots &\rLine & ( n-1 ) &\rLine & n \ed $, and  with any orientation of the arrows. Let $\rho, \rho' \in Rep_k(L)$ be  two representations  with dimension vectors $\alpha=\ul{\dim}(\rho)$, $\alpha'=\ul{\dim}(\rho')$, s. t. 
\begin{gather}   0<\alpha_1\leq \alpha_2 \leq \dots \leq \alpha_{n-1}\leq \alpha_n  \qquad \qquad 0<\alpha'_1\leq \alpha'_2 \leq \dots \leq \alpha'_{n-1}\leq \alpha'_n, \nonumber 
\end{gather}
and s. t. for each  $a\in Arr(L)$ the linear maps $\rho_a, \rho'_a$ have maximal rank.  Then the linear map 
\begin{gather} \label{F_rho,rho'linear} \bd \prod_{i=1}^n \Hom(k^{\alpha_i},k^{\alpha'_i}) & \rTo^{F^L_{\rho, \rho'} } & \prod_{a\in Arr(L)} \Hom(k^{\alpha_{s(a)}}, k^{\alpha'_{t(a)}})\ed  \quad 
 \{f_i\}_{i=1}^n \mapsto \left \{ 
f_{t(a)}\circ \rho_a - \rho'_a \circ f_{s(a)} \right \}_{ a \in Arr(L)} \end{gather}
has the following properties:

\textbf{(a)} The map $\ker(F^L_{\rho,\rho'}) \rightarrow \Hom(k^{\alpha_n},k^{\alpha'_n})$, defined by  $\ker(F^L_{\rho,\rho'}) \ni  \{f_i\}_{i=1}^n \mapsto f_n$, is injective.

\textbf{(b)} For any $x \in \Hom(k^{\alpha_1}, k^{\alpha'_1})$  and any $y\in \prod_{a\in Arr(L)} \Hom(k^{\alpha_{s(a)}}, k^{\alpha'_{t(a)}})$ there exists $\{f_i\}_{i=1}^n \in \prod_{i=1}^n \Hom(k^{\alpha_i},k^{\alpha'_i})$, s. t. $F^L_{\rho,\rho'}\left ( \{f_i\}_{i=1}^n  \right )=y$ and $f_1=x$. 

\textbf{(c)} In particular, the map $F^L_{\rho,\rho'}$ is surjective and  $\dim(\ker(F^L_{\rho,\rho'}))=\sum_{i=1}^n \alpha_i \alpha'_i - \sum_{a \in Arr(L)} \alpha_{s(a)} \alpha'_{t(a)} $. 

\textbf{(d)} If we are given a surjective map $\bd k^{\alpha'_0} & \lTo^{x}& k^{\alpha'_1} \ed$,  then the linear  map 
\begin{gather}  \bd \prod_{i=1}^n \Hom(k^{\alpha_i},k^{\alpha'_i}) & \rTo^{G^L_{\rho,\rho',x}} & \Hom(k^{\alpha_1},k^{\alpha'_0}) \oplus \prod_{a\in Arr(L)} \Hom(k^{\alpha_{s(a)}}, k^{\alpha'_{t(a)}})\ed   \nonumber\\[-5mm] \label{F_2}  \\ 
 \{f_i\}_{i=1}^n \mapsto (-x \circ f_1, F_{\rho,\rho'}^L(\{f_i\}_{i=1}^n)) \nonumber \end{gather}
is surjective and the dimension of its kernel is $\sum_{i=1}^n \alpha_i \alpha'_i - \sum_{a \in Arr(L)} \alpha_{s(a)} \alpha'_{t(a)}-\alpha_1 \alpha'_0 $. 

\textbf{(e)} If we are given an injective  map $\bd k^{\alpha_0} & \rTo^{x}& k^{\alpha_1} \ed$, then the linear  map 
\begin{gather}  \bd \prod_{i=1}^n \Hom(k^{\alpha_i},k^{\alpha'_i}) & \rTo^{H^L_{\rho,\rho',x}} & \Hom(k^{\alpha_0},k^{\alpha'_1}) \oplus \prod_{a\in Arr(L)} \Hom(k^{\alpha_{s(a)}}, k^{\alpha'_{t(a)}})\ed \nonumber  \\[-5mm] \label{F_3}\\
 \{f_i\}_{i=1}^n \mapsto (f_1 \circ x, F_{\rho,\rho'}^L(\{f_i\}_{i=1}^n)) \nonumber \end{gather}
is surjective and the dimension of its kernel is $\sum_{i=1}^n \alpha_i \alpha'_i - \sum_{a \in Arr(L)} \alpha_{s(a)} \alpha'_{t(a)}-\alpha_0 \alpha'_1 $. 

\textbf{(f)} If we are given an injective  map $\bd k^{\alpha'_n} & \rTo^{x}& k^{\alpha'_{n+1}} \ed$, then the linear  map 
\begin{gather}  \bd \prod_{i=1}^n \Hom(k^{\alpha_i},k^{\alpha'_i}) & \rTo & \Hom(k^{\alpha_n},k^{\alpha'_{n+1}}) \oplus \prod_{a\in Arr(L)} \Hom(k^{\alpha_{s(a)}}, k^{\alpha'_{t(a)}})\ed \nonumber  \\
 \{f_i\}_{i=1}^n \mapsto (-x \circ f_n, F_{\rho,\rho'}^L(\{f_i\}_{i=1}^n)) \nonumber \end{gather}
is injective. 
\end{lemma}
\bpr We prove first (a) and (b). Let $n=2$. We consider the two possible orientations of the arrow $\bd 1 & \rLine & 2\ed$. 

If the arrow starts at $1$, then consider the diagram \bd[1.5em]  k^{\alpha_1} & \rTo^{\rho} &  k^{\alpha_2} \\ 
                                                                    \dTo^{f_1}     &             & \dTo^{f_2}           \\
																																k^{\alpha'_1} & \rTo^{\rho'} &  k^{\alpha'_2}  \ed
Now the map $F_{\rho,\rho'}^L$ is $F_{\rho,\rho'}^L(f_1,f_2)=f_2 \circ \rho - \rho' \circ f_1 $ and $\rho$,  $\rho'$ are injective. 		If $	F_{\rho,\rho'}^L(f_1,f_2)=0$ and $f_2=0$, then 		$\rho' \circ f_1 =0$, and by the injectivity of $\rho'$ we obtain  $f_1 =0$. Thus, we obtain (a). 

To show (b) we have to find $f_2 \in \Hom(k^{\alpha_2}, k^{\alpha'_2})$, s. t. $f_2 \circ \rho - \rho' \circ x=y$ for any $x\in \Hom(k^{\alpha_1}, k^{\alpha'_1})$ and any $y\in \Hom(k^{\alpha_1}, k^{\alpha'_2})$. Since $\rho$ is injective, then it has left inverse $\pi:k^{\alpha_2} \rightarrow  k^{\alpha_1}$, and then we can choose $f_2=(y+\rho' \circ x)\circ \pi$.

If the arrow starts at $2$, then consider the diagram \bd[1.5em]  k^{\alpha_1} & \lTo^{\rho} &  k^{\alpha_2} \\ 
                                                                    \dTo^{f_1}     &             & \dTo^{f_2}           \\
																																k^{\alpha'_1} & \lTo^{\rho'} &  k^{\alpha'_2}  \ed
Now the map $F_{\rho,\rho'}^L$ is $F_{\rho,\rho'}^L(f_1,f_2)=f_1 \circ \rho - \rho' \circ f_2 $ and $\rho$,  $\rho'$ are surjective. 		If $	F_{\rho,\rho'}^L(f_1,f_2)=0$ and $f_2=0$, then 		$ f_1 \circ \rho =0$, and by the surjectivity  of $\rho$ we obtain  $f_1 =0$. Thus, we obtain (a). 

To show (b) we have to find $f_2 \in \Hom(k^{\alpha_2}, k^{\alpha'_2})$, s. t. $x \circ \rho - \rho' \circ f_2 =y$ for any $x\in \Hom(k^{\alpha_1}, k^{\alpha'_1})$ and any $y\in \Hom(k^{\alpha_2}, k^{\alpha'_1})$. Since $\rho'$ is surjective, then it has right inverse $in:k^{\alpha'_1} \rightarrow  k^{\alpha'_2}$, and then we can choose $f_2=in \circ (x \circ \rho-y)$.

So far, we proved  (a), (b), when $n=2$.  Now by using induction and the already proved  case $n=2$ one can easily prove (a), (b) for each $n\geq 2$. The statements in (c), (d), (e), and  (f) follow from (a) and (b).
\epr 
In Lemma \ref{second comp hill} we allow one of the components $\rho$, $\rho'$ to be of a  type different from thin. More precisely: 
\begin{df} \label{hill} Let $Q$ be a star shaped quiver(as in Figure \eqref{star shaped quiver}).  We say that  $\alpha \in \NN^{V(Q)}$ is a hill vector if  \begin{gather} \alpha(u_1) \leq \alpha(u_2) \leq \dots \leq \alpha(u_{m-1}) \leq \alpha(s), \qquad \alpha(u_{m-1}) > 0  \nonumber  \\
\label{nondecreasing} \alpha(v_1) \leq \alpha(v_2) \leq \dots \leq \alpha(v_{n-1}) \leq \alpha(s), \qquad \alpha(v_{n-1}) > 0     \\
\alpha(w_1) \leq \alpha(w_2) \leq \dots \leq \alpha(w_{p-1}) \leq \alpha(s), \qquad \alpha(w_{p-1}) > 0.  \nonumber   \end{gather}
In the case of number of rays bigger than three, we impose that $\alpha$ is non-vanishing on all the vertices adjacent to $s$.  
\end{df}

In the proof of  Lemma \ref{second comp hill}    we use the following  simple observation:
\begin{lemma} \label{lemma with dimensions} Let $Y\subset V$ be a vector subspace in a vector space $V$ and  $\dim(Y)=y$, $\dim(V)=n$. Let $\{x_1,\dots,x_m\}$ be  integers in $\{0,1,\dots, n\}$. 

(a) If $y+\sum_{i=1}^m x_i - m n \geq 0$, then one can choose vector supspaces $\{ X_i \subset V \}_{i=1}^m$ so that $\dim(X_i)=x_i$ and \be \dim\left (Y \cap \bigcap_{i=1}^m X_i \right ) = y+\sum_{i=1}^m x_i - m n\ee

(b) If $y+\sum_{i=1}^m x_i - m n < 0$, then one can choose vector supspaces $\{ X_i \subset V \}_{i=1}^m$ so that $\dim(X_i)=x_i$ and \be Y \cap \bigcap_{i=1}^m X_i  = \{0\}.\ee
\end{lemma}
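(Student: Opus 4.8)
The plan is to prove the sharper unified statement that in both cases one can arrange
$$\dim\left(Y \cap \bigcap_{i=1}^m X_i\right) = \max\left(0,\ y + \sum_{i=1}^m x_i - m n\right),$$
which immediately yields (a) (where the right-hand side equals $y+\sum_{i=1}^m x_i - mn$) and (b) (where it equals $0$). The whole lemma then reduces to the case $m=1$ together with a monotonicity bookkeeping, so I would set it up as an induction on $m$.

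First I would dispose of the base case $m=1$: given a subspace $Y$ of dimension $y$ and an integer $0\le x\le n$, I must produce a subspace $X$ with $\dim X = x$ and $\dim(Y\cap X)=\max(0,y+x-n)$. The lower bound $\dim(Y\cap X)\ge \max(0,\,\dim Y + \dim X - \dim(Y+X))\ge \max(0,y+x-n)$ holds for every such $X$ by the Grassmann formula together with $\dim(Y+X)\le n$. For the matching upper bound I would give an explicit choice: fix a basis $e_1,\dots,e_y$ of $Y$, extend it to a basis $e_1,\dots,e_n$ of $V$, and take $X=\mathrm{span}(e_{n-x+1},\dots,e_n)$. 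Then $\dim X = x$, while $Y\cap X=\mathrm{span}(e_i : n-x+1\le i\le y)$ has dimension exactly $\max(0,x+y-n)$.

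The second ingredient is the observation that $d_k:=y+\sum_{i=1}^k x_i - kn$ is non-increasing in $k$, since $d_k-d_{k-1}=x_k-n\le 0$. I would then induct: assuming $X_1,\dots,X_{k-1}$ have been chosen so that $W_{k-1}:=Y\cap X_1\cap\cdots\cap X_{k-1}$ has dimension $\max(0,d_{k-1})$, I apply the base case to the pair $(W_{k-1},X_k)$ inside $V$ to obtain $X_k$ of dimension $x_k$ with $\dim(W_{k-1}\cap X_k)=\max(0,\,\dim W_{k-1}+x_k-n)$. A short case check shows this equals $\max(0,d_k)$: if $d_{k-1}\ge 0$ then $\dim W_{k-1}=d_{k-1}$ and the expression is $\max(0,d_k)$ directly; if $d_{k-1}<0$ then $\dim W_{k-1}=0$ and, since $x_k\le n$ forces $d_k\le d_{k-1}<0$, both sides vanish. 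Hence $\dim W_m=\max(0,d_m)$, which is the claim.

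The only delicate point is the bookkeeping with the truncation $\max(0,\cdot)$ across the induction steps, and this is exactly what the monotonicity of $d_k$ resolves: in case (a) one has $d_k\ge d_m\ge 0$ for all $k$, so no truncation ever occurs and the intersection dimension drops by precisely $n-x_k$ at each stage; in case (b) the sequence $d_k$ is negative from some index onward and the intersection is forced down to $\{0\}$. I would also emphasize that the argument is entirely constructive and uses neither genericity nor any field hypothesis, so the algebraic closedness of $k$ plays no role here.
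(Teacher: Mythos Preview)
Your proof is correct and follows essentially the same approach as the paper: induction on $m$, reducing to the $m=1$ case applied at each step to the running intersection $W_{k-1}=Y\cap X_1\cap\cdots\cap X_{k-1}$. Your unified formulation via $\max(0,d_k)$ and the explicit basis construction for the base case are slightly cleaner than the paper's separate treatment of (a) and (b), but the underlying argument is the same.
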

\bpr \textbf{(a)} If $m=1$, then we have $y+x_1 \geq n$. Therefore we can choose $X_1\subset V$, so that $\dim(X_1)=x_1$ and $X_1+Y=V$. Therefore by a well known formula, we have $n=\dim(X_1+Y)=\dim(X_1)+\dim(Y)-\dim(X_1\cap Y)=$ $x_1+y - \dim(X_1\cap Y)$ . Hence $ \dim(X_1\cap Y)=x_1+y-n$.

Suppose that (a) holds for some $m\geq 1$ and take  any collection of integers $\{x_1,\dots,x_m, x_{m+1}\}$ in $\{0,1,\dots, n\}$, s. t. $y+\sum_{i=1}^{m+1} x_i - (m+1) n \geq 0$. We can rewrite the last inequality as follows
$ n \leq y+\sum_{i=1}^{m} x_i +x_{m+1} - m n$. On the other hand $x_{m+1}\leq n$, therefore 
\begin{gather}  n \leq y+\sum_{i=1}^{m} x_i +x_{m+1} - m n \leq  y+\sum_{i=1}^{m} x_i +n - m n \ \ \Rightarrow \ \ 0\leq  y+\sum_{i=1}^{m} x_i  - m n.\end{gather}
Now by the induction assumption we obtain vector subspaces $\{X_i\subset V\}_{i=1}^m$ with $\{ \dim(X_i)=x_i \}_{i=1}^m$ and $\dim \left (Y \cap \bigcap_{i=1}^m X_i\right ) = y + \sum_{i=1}^{m} x_i  - m n $. Now we have $\dim \left (Y \cap \bigcap_{i=1}^m X_i\right ) + x_{m+1} \geq n$ and as in the case $m=1$ we find a vector subspace $X_{m+1}\subset V$ with $\dim(X_{m+1})=x_{m+1}$ and $\dim \left (Y \cap \bigcap_{i=1}^{m+1} X_i\right )=\dim \left (Y \cap \bigcap_{i=1}^m X_i\right ) + x_{m+1}-n= y + \sum_{i=1}^{m+1} x_i  - (m+1) n$. Thus, we proved (a).

\textbf{(b)} If $m=1$, then the statement is obvious. Now we assume that we have (b) for some $m\geq 1$. Let $\{x_1,\dots,x_m, x_{m+1}\}$ be  any collection of integers in $\{0,1,\dots, n\}$, s. t. $y+\sum_{i=1}^{m+1} x_i - (m+1) n <0 $.  If $y+\sum_{i=1}^{m} x_i -m n <0$, then we use the induction assumption.  If $y+\sum_{i=1}^{m} x_i -m n \geq 0$, then we use (a) to obtain vector subspaces $\{X_i\subset V\}_{i=1}^m$ with $\{\dim(X_i)=x_i\}_{i=1}^m$ and  $\dim \left (Y \cap \bigcap_{i=1}^m X_i\right )=y+\sum_{i=1}^{m} x_i -m n$.  Now we have $\dim \left (Y \cap \bigcap_{i=1}^m X_i\right ) + x_{m+1} =y+\sum_{i=1}^{m+1} x_i -m n <n$, therefore we can choose $X_{m+1}\subset V$ with $\dim(X_{m+1})=x_{m+1}$ and $Y \cap \bigcap_{i=1}^{m+1} X_i = \{0\}$. The lemma is proved.
\epr
\begin{lemma} \label{second comp hill} Let $\rho, \rho' \in Rep_k(Q)$ be two exceptional representations with thin and hill dimension vectors, respectively. Then $F_{\rho, \rho'}$ and $F_{\rho',\rho}$ have maximal rank.
\end{lemma}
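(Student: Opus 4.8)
The plan is to localise the computation to the individual rays of $Q$ via Lemma \ref{big lemma} and then to reassemble the rays at the central vertex $s$ by means of Lemma \ref{lemma with dimensions}. Write $\alpha=\ul{\dim}(\rho)$, $\alpha'=\ul{\dim}(\rho')$. By Lemma \ref{lemma about F_rhi,rhi'}(c),(d), maximal rank of $F_{\rho,\rho'}$ is equivalent to $\dim\ker(F_{\rho,\rho'})=\max(0,\langle\alpha,\alpha'\rangle)$, and by Corollary \ref{main coro} it suffices to produce a single pair of representatives $x\in Rep(\alpha)$, $y\in Rep(\alpha')$ for which $F_{x,y}$ has maximal rank; maximality then transfers to the exceptional $\rho,\rho'$. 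The assertion for $F_{\rho',\rho}$ will follow from the one for $F_{\rho,\rho'}$: by Lemma \ref{lemma about F_rhi,rhi'}(e), $F^{Q}_{\rho',\rho}$ has maximal rank iff $F^{Q^{\vee}}_{\rho^{\vee},\rho'^{\vee}}$ does, and $Q^{\vee}$ is again star shaped while $\rho^{\vee},\rho'^{\vee}$ are again exceptional with thin, resp. hill, dimension vector (both conditions depend only on $\ul{\dim}$, which is unchanged by $\vee$ by \eqref{vee1}). So I only treat $F_{\rho,\rho'}$.

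Assume first that $\alpha_s>0$, i.e. $\alpha_s=1$. Put $V=\Hom(k^{\alpha_s},k^{\alpha'_s})$, and for each ray $r$ let $L_r$ denote the linear quiver consisting of that ray together with $s$, viewed with $s$ as its inner endpoint. Because the rays meet only at $s$, once the central component $f_s\in V$ is fixed the defining equations of $F_{\rho,\rho'}$ decouple over the rays. On a ray where $\alpha$ is nonzero the support, being the restriction of a connected support containing $s$, is an interval ending at $s$, so $\alpha|_{L_r}$ has the shape $(0,\dots,0,1,\dots,1)$; thus both $\alpha|_{L_r}$ and $\alpha'|_{L_r}$ are nondecreasing toward $s$, and after choosing maximal-rank arrow maps for the representatives Lemma \ref{big lemma} applies to $L_r$ with $s$ in the role of the vertex $n$. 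By part (a) the projection $\{f_i\}\mapsto f_s$ is injective on the kernel of the ray differential, so its image is a subspace $Y_r\subseteq V$ whose dimension is given by part (c); rays on which $\alpha$ vanishes contribute at most one equation $-\rho'_a\circ f_s$ and likewise carve out a subspace $Y_r\subseteq V$. In every case $\ker(F_{\rho,\rho'})=\bigcap_r Y_r$.

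Next comes the Euler bookkeeping. Grouping the Euler form of $Q$ by rays gives $\langle\alpha,\alpha'\rangle=N+\sum_r e_r$, where $N=\dim V=\alpha_s\alpha'_s$ and $e_r$ collects the vertex and arrow contributions internal to $L_r$; part (c) of Lemma \ref{big lemma} shows $\dim Y_r=N+e_r$ (note $e_r\le 0$ since $Y_r\subseteq V$). Hence, writing $q$ for the number of rays, $\sum_r\dim Y_r-(q-1)N=\langle\alpha,\alpha'\rangle$. Choosing the ray data generically so that the $Y_r$ lie in general position inside $V$, Lemma \ref{lemma with dimensions} (with $Y=Y_{r_0}$ and the remaining $Y_r$ playing the role of the $X_i$) yields $\dim\bigcap_r Y_r=\max(0,\sum_r\dim Y_r-(q-1)N)=\max(0,\langle\alpha,\alpha'\rangle)$, which is exactly maximal rank. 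In the remaining case $\alpha_s=0$ the representation $\rho$ is supported on a single ray disjoint from $s$, so $F_{\rho,\rho'}$ reduces to a differential on that linear quiver, where $\alpha$ is a constant block $(1,\dots,1)$ on an interval and $\alpha'$ is monotone; Lemma \ref{big lemma}(c) together with the augmented maps (d)--(f), which absorb the arrows joining the interval to $A'\setminus A$, gives maximal rank directly.

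The step I expect to be the main obstacle is the general-position claim used in the gluing: one must verify that, by varying the maximal-rank ray maps of the representatives together with the $GL(\alpha'_s)$-action at $s$, the induced subspaces $Y_r\subseteq V$ can indeed be brought into general position, so that Lemma \ref{lemma with dimensions} applies and the extremal value $\max(0,\langle\alpha,\alpha'\rangle)$ is attained. A second, more bookkeeping-type nuisance is making sure that empty rays and the degenerate case $\alpha_s=0$ are genuinely subsumed by the same dimension count; this amounts to checking that the hypothesis $\alpha_i>0$ of Lemma \ref{big lemma} is either satisfied on the rays that matter or circumvented through its augmented forms (d)--(f).
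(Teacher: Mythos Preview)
Your plan is essentially the paper's own proof: decompose along rays and apply Lemma~\ref{big lemma}, glue at $s$ using Lemma~\ref{lemma with dimensions}, invoke Corollary~\ref{main coro} for genericity, and use the duality of Lemma~\ref{lemma about F_rhi,rhi'}(e) for $F_{\rho',\rho}$. The two obstacles you flag at the end are precisely where the work lies, and the paper resolves them not by a single uniform argument but by a three-way case split on how $A\cap A'$ sits relative to the centre $s$: (i) $A\cap A'$ lies entirely in one ray and misses $s$; (ii) $A\cap A'$ lies in one ray together with $s$; (iii) $A\cap A'$ passes through $s$ into several rays. The key difference from your write-up is that the paper restricts $\rho,\rho'$ to the subquiver $Q_{A\cap A'}$ rather than to the full rays $L_r$; on $Q_{A\cap A'}$ both dimension vectors are strictly positive, so the hypothesis $\alpha_1>0$ of Lemma~\ref{big lemma} is honestly met, and the arrows crossing the boundary of $A\cap A'$ are then absorbed one at a time by the augmented maps (d), (e), (f). Your attempt to apply Lemma~\ref{big lemma} directly to $(0,\dots,0,1,\dots,1)$ on the whole ray is exactly the ``bookkeeping nuisance'' you anticipated, and the paper's restriction to $Q_{A\cap A'}$ is the clean way around it; the preliminary reductions via Lemmas~\ref{lemma with one simple representation}, \ref{lemma about A setminus A'}, and \ref{about A cap A' single element} dispose of the degenerate cases $\#A\le 1$, $\#(A\cap A')\le 1$, and $Arr(A\setminus A',A'\setminus A)\neq\emptyset$ before the ray analysis begins.

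Your ``main obstacle'', the general-position step, is handled exactly as you guessed, via the conjugation formula~\eqref{action of g}. Since $\rho$ is thin you keep it fixed and vary only $\wt{\rho'}\in Rep(\alpha')$. For a single ray $L_i$, replace $\wt{\rho'}$ on the arrows of $L_i$ by $(g'.\wt{\rho'})\vert_{L_i}$ with $g'_s\in GL(\alpha'_s)$ arbitrary and $g'_j={\rm Id}$ for $j\neq s$; since $T_i$ depends only on $\wt{\rho'}\vert_{L_i}$, formula~\eqref{action of g} gives $\ker(T_i^{\rm new})=L_{g'}(\ker T_i)$ and hence $\kappa_i(\ker T_i^{\rm new})=g'_s\cdot\kappa_i(\ker T_i)$, while the other $T_j$ and $\kappa_j$ are untouched. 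Thus each $\kappa_i(\ker T_i)$ can be rotated independently through its $GL(\alpha'_s)$-orbit, i.e.\ through all subspaces of the correct dimension, and Lemma~\ref{lemma with dimensions} applies. In case~(ii) the role of the ``movable'' subspaces is played instead by the kernels $\ker(\wt{\rho'}_a)$ for the outward arrows $a$ at $s$, which are manifestly arbitrary subspaces of the right dimension.
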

\bpr We show first that $F_{\rho,\rho'}$ has  maximal rank. Since now $\rho$ is exceptional and thin,  we can apply Remark \ref{thin remark} to it. In particular $\rho_a = 1$ for each $a \in Arr(A,A)$.   Due to Lemma \ref{lemma with one simple representation} we can assume that  $\#(A) \geq 2$, hence $A\cap A' \neq \{s\}$(we are given also that $\alpha'$ does not vanish on all vertices adjacent to $s$). Due to Lemma \ref{about A cap A' single element}  we can assume that $\#(A\cap A')\geq 2$. Lemma \ref{lemma about A setminus A'}   considers the case $Arr(A\setminus A', A'\setminus A) \not = \emptyset$, hence we can assume that  $Arr(A\setminus A', A'\setminus A)  = \emptyset$  and  we can write (recall \eqref{F_rho,rho'1}, \eqref{F_rho,rho'2},  \eqref{Arr(A,A')})
\begin{gather}\label{langle ranglex} 
\langle \alpha,\alpha' \rangle = \sum_{i\in A\cap A'}\alpha'_i- \sum_{a\in Arr(A\cap A',A\cap A')}  \alpha'_{t(a)}- \sum_{a\in Arr(A\cap A',A' \setminus A)}  \alpha'_{t(a)} -  \sum_{a\in Arr(A \setminus A',A\cap A')}  \alpha'_{t(a)} \\ \label{F_rho,rho'1x} F_{\rho, \rho'} : \prod_{i\in A\cap A'} \Hom\left (k,k^{\alpha'_i}\right ) \rightarrow \prod_{a\in Arr(A,A')} \Hom\left (k, k^{\alpha'_{t(a)}}\right )  \\ 
\label{F_rho,rho'2x} F_{\rho, \rho'}\left (\{f_i\}_{i\in A\cap A'} \right ) =\left \{  \begin{array}{cc}
f_{t(a)} - \rho'_a \circ f_{s(a)}  & a \in Arr(A\cap A',A\cap A')  \\
 - \rho'_a \circ f_{s(a)}  & a \in Arr(A\cap A',A' \setminus A)  \\
f_{t(a)}  & a \in Arr(A \setminus A',A\cap A')  \end{array} \right \}. \end{gather}

\ul{We consider first the case(see  \eqref{star shaped quiver}) $A\cap A'\subset \{u_1,u_2,\dots,u_{m-1}\}$.}  Now $\Gamma(Q_{A\cap A'}) $ has the form  \\ $\bd[1em] u_i & \rLine & u_{i+1} &\rLine & \dots &\rLine &  u_{i+k-1}  &\rLine & u_{i+k}  \ed $. Let us denote by $\rho_{r}$,  $\rho'_{r}$ the representations $\rho$, $\rho'$ restricted to $Q_{A\cap A'}$. Then $Q_{A\cap A'}$, $\rho_{r}$,  $\rho'_{r}$  satisfy the conditions in  Lemma \ref{big lemma} (recall also the last statement in Corollary \ref{main coro} ).
 Let us denote by $a$ the arrow adjacent to $Q_{A\cap A'}$ at  $u_{i+k}$. If $a$ starts at $u_{i+k}$, i. e. it points towards the splitting point $s$, then, due to  \eqref{nondecreasing}, $a\in Arr(A\cap A',A'\setminus A)$ and $\rho'_a$ is injective. In this case $\pi \circ F_{\rho,\rho'}$, where  $\pi$ is some projection, is the same as the linear map in Lemma \ref{big lemma} (f) with $x=\rho'_a$, hence  $F_{\rho,\rho'}$ is injective. Let the arrow $a$ ends at $u_{i+k}$, then it is neither in  $Arr(A\cap A',A'\setminus A)$ nor in  $Arr(A\setminus A',A'\cap A)$. 
Let us denote by $b$ the arrow  adjacent to $Q_{A\cap A'}$ at  $u_{i}$.
Now if $b$  starts at $u_{i}$ and $u_{i-1} \in A'$, then  $b\in Arr(A\cap A', A'\setminus A)$ and $F_{\rho,\rho'}$ is the same as the linear map $G^{Q_{A\cap A'}}_{\rho_r,\rho'_r}$ in Lemma \ref{big lemma} (d) with $x=\rho'_b$. If $b$ starts at $u_{i}$ and $u_{i-1} \not \in A'$, then  $ Arr(A\cap A', A'\setminus A)=$ $Arr(A\setminus A', A'\cap A)=\emptyset$ and $F_{\rho,\rho'}$ is the same as $F^{Q_{A\cap A'}}_{\rho_r,\rho'_r}$ from Lemma \ref{big lemma}.
If $b$  ends at $u_{i}$ and $u_{i-1} \in A$, then  $b\in Arr(A\setminus A', A'\cap A)$ and $F_{\rho,\rho'}$ is the same as the linear map $H^{Q_{A\cap A'}}_{\rho_r,\rho'_r}$ in Lemma \ref{big lemma} (e) with $x={\rm Id}_k$. If $b$ ends at $u_{i}$ and $u_{i-1} \not \in A$, then  $ Arr(A\cap A', A'\setminus A)=$ $Arr(A\setminus A', A'\cap A)=\emptyset$ and $F_{\rho,\rho'}$ is the same as $F^{Q_{A\cap A'}}_{\rho_r,\rho'_r}$ from Lemma \ref{big lemma}. Thus, we see that $F_{\rho,\rho'}$ has maximal rank, when  $A\cap A'\subset \{u_1,u_2,\dots,u_{m-1}\}$.

\ul{Next, we consider the case $A\cap A'\subset \{u_1,u_2,\dots,u_{m-1},s\}$ and $s\in A \cap A'$.}  Now $\Gamma(Q_{A\cap A'}) $ has the form  $\bd[1em] u_i & \rLine & u_{i+1} &\rLine & \dots &\rLine &  u_{m-1}  &\rLine & s  \ed $ and the vertices adjacent to $s$ different from $u_{m-1}$  ($v_{n-1}$, $w_{p-1}$ in figure \eqref{star shaped quiver} in case there are only three rays) are not elements of $A$.  We denote by $\rho_r$, $\rho'_r$ the restrictions of $\rho$, $\rho'$ to $Q_{A\cap A'}$. Let us denote   the set of arrows between $s$ and  the vertices adjacent to $s$ different from $u_{m-1}$ by $S'$. If all arrows  in $S'$ end    at $s$, then  none of them is in $Arr(A \setminus A',A\cap A')\cup Arr(A\cap A',A' \setminus A)$ and, as in the previous case, $F_{\rho,\rho'}$ is one of the following three linear maps: $F_{\rho_r, \rho'_r}^{Q_{A\cap A'}}$(see \eqref{F_rho,rho'linear}), $G_{\rho_r, \rho'_r,x}^{Q_{A\cap A'}}$(see \eqref{F_2}),  $H_{\rho_r, \rho'_r,{\rm Id}_k}^{Q_{A\cap A'}}$(see \eqref{F_3}) considered in Lemma \ref{big lemma}, hence $F_{\rho,\rho'}$ is surjective.  
It is useful to denote 
\begin{gather} S= S'\cap Arr(A\cap A',A' \setminus A). \end{gather} 
For $a\in S$ the linear map $\rho'_a$ is surjective and 
\begin{gather} \label{ker rho'_a} \forall a \in S \ \ \ \ \ \ \dim(\ker(\rho'_a))=\alpha'_s-\alpha'_{t(a)}.\end{gather}
Looking at \eqref{F_rho,rho'2x} we see that  $F_{\rho,\rho'}$ has the form 
\begin{gather} \label{F_rho...}F_{\rho, \rho'}\left (\{f_i\}_{i\in A\cap A'} \right ) = \left (T\left (\{f_i\}_{i\in A\cap A'} \right ), \left \{  
 - \rho'_a \circ f_{s}  \right \}_{ a \in S} \right ),\end{gather}
where $T$ is one of $F_{\rho_r, \rho'_r}^{Q_{A\cap A'}}$, $G_{\rho_r, \rho'_r,x}^{Q_{A\cap A'}}$,  $H_{\rho_r, \rho'_r,{\rm Id}_k}^{Q_{A\cap A'}}$. In the tree cases $\ker(F_{\rho, \rho'})\subset \ker(T)\subset \ker\left (F_{\rho_r, \rho'_r}^{Q_{A\cap A'}}\right )$ and by Lemma \ref{big lemma} (a) the linear map $\bd \ker(T) & \rTo^{\kappa} & \Hom(k,k^{\alpha_s})\ed$ defined by projecting to the $\Hom(k,k^{\alpha_s})$-component is injective(here and in \eqref{F_rho...} the notation  $s$ is the splitting vertex of the quiver $Q$ (as in figure \eqref{star shaped quiver}).  Now from \eqref{F_rho...} we see that 
\begin{gather} \label{dim(ker(F_rho,...))} \dim(\ker(F_{\rho, \rho'}))=\dim(\kappa(\ker(F_{\rho, \rho'})))=\dim\left (\kappa(\ker T) \cap \bigcap_{a \in S} \ker(\rho'_a) \right ) \end{gather}
On the other hand by and  (c), (d), (e) in Lemma \ref{big lemma}, \eqref{ker rho'_a}, and the formula \eqref{langle ranglex} one  shows that 
\begin{gather}\label{langle...}  \langle \alpha ,\alpha' \rangle = \langle \ul{\dim}(\rho),\ul{\dim}(\rho') \rangle = \dim(\kappa(\ker T)) + \sum_{a\in S} \dim( \ker(\rho'_a)) - \# (S) \  \alpha'_s. \end{gather}
 The feature of $\rho'$, due to the fact that it is an exceptional representation,   used so far is that  $\rho'_a$ is of maximal rank for any $a\in Arr(Q)$.  All considerations hold for any $\widetilde{\rho'} \in Rep(\alpha')$ s. t.  $\widetilde{\rho'}_a$ is of maximal rank for  $a\in Arr(Q)$. For any such $\widetilde{\rho'}$ we have $\kappa(\ker(\widetilde{T}))\subset \Hom(k,k^{\alpha'_s}) \cong  k^{\alpha'_s}$, $\ker(\widetilde{\rho'}_a)\subset \Hom(k,k^{\alpha'_s}) \cong k^{\alpha'_s}$ for $a\in S$  and  \eqref{dim(ker(F_rho,...))}, \eqref{langle...}  hold.  If $\langle \alpha ,\alpha' \rangle \geq 0$, then by Lemma \ref{lemma with dimensions} (a) we can choose $\{ \widetilde{\rho'}_a \}_{a \in S}$ (without changing the rest elements of $\rho'$) so that 
\be \langle \alpha ,\alpha' \rangle=  \dim(\kappa(\ker T)) + \sum_{a\in S} \dim( \ker(\widetilde{\rho'_a})) - \# (S)\  \alpha'_s =\dim\left (\kappa(\ker T) \cap \bigcap_{a \in S} \ker(\widetilde{\rho'}_a) \right ).  \ee  Therefore, by \eqref{dim(ker(F_rho,...))}   we get $\dim(\ker(F_{\rho,\widetilde{\rho'}}))= \langle \ul{\dim}(\rho),\ul{\dim}(\widetilde{\rho'}) \rangle$, which implies that $F_{\rho,\widetilde{\rho'}}$ is surjective(see Lemma \ref{lemma about F_rhi,rhi'} (b)). Now Corollary \ref{main coro} shows that $F_{\rho,\rho'}$ has maximal rank.
If $\langle \alpha ,\alpha' \rangle < 0$, then by Lemma \ref{lemma with dimensions} (b) we can choose $\{ \widetilde{\rho'}_a \}_{a \in S}$ (without changing the rest elements of $\rho'$) so that 
$ \{0\} = \kappa(\ker \wt{T}) \cap \bigcap_{a \in S} \ker(\widetilde{\rho'}_a) $. Hence \eqref{dim(ker(F_rho,...))}  implies that $F_{\rho,\widetilde{\rho'}}$ is injective. Now Corollary \ref{main coro} shows that $F_{\rho,\rho'}$ has maximal rank.

\ul{Finally, we consider the case $\{u_{m-1},s\} \subset A\cap A'\not \subset \{u_1,u_2,\dots,u_{m-1},s\}$.}
Now some of the vertices adjacent to  $s$ different from $u_{m-1}$ are in $A\cap A'$. 
Let $U$ be the set of vertices adjacent to $s$ which are in $ A\cap A'$, in particular $u_{m-1}\in U$. 
 Let us denote $L_u = Q_{A\cap A' \cap  \{u_1,u_2,\dots,u_{m-1},s\}}$, and let $\rho^u$, $\rho'^u$  be the restrictions of $\rho$, $\rho'$ to $L_u$. Similarly we obtain $L_j$,  $\rho^j$, $\rho'^j$ for any $j\in U$. Then we can apply Lemma \ref{big lemma} to $L_j$,  $\rho^j$, $\rho'^j$ for $j\in U$.  We denote  here  by $S'$  the set of arrows between $s$ and  the vertices adjacent to $s$ which are not in  $U$. Finally let:
\begin{gather} \label{S in the last case1}  S= S'\cap Arr(A\cap A',A' \setminus A). \end{gather}
For $a\in S$ the linear map $\rho'_a: k^{\alpha'_s}\rightarrow k^{\alpha'_{t(a)}}$ is surjective and 
\begin{gather} \label{ker rho'_a1} \forall a \in S \ \ \ \ \ \ \dim(\ker(\rho'_a))=\alpha'_s-\alpha'_{t(a)}\end{gather}
 Furthermore, using \eqref{F_rho,rho'2x} and Lemma \ref{big lemma}  we can express $F_{\rho,\rho'}$ as follows:
\begin{gather}  \label{F_rho...1} F_{\rho, \rho'}\left (\{f_i\}_{i\in A\cap A'} \right )  = \left (\left \{ T_j\left (\{f_i\}_{i\in V(L_j)} \right ) \right \}_{j\in U}, \left \{  
 - \rho'_a \circ f_{s}  \right \}_{ a \in S} \right ) 
 \end{gather}
where for $j\in U$ the linear map $T_j$ is one of $F_{\rho^j, \rho'^j}^{L_j}$(see \eqref{F_rho,rho'linear}), $G_{\rho^j, \rho'^j,x_j}^{L_j}$(see \eqref{F_2}),  $H_{\rho^j, \rho'^j,{\rm Id}_k}^{L_j}$(see \eqref{F_3}). Using (c), (d), (e) in  Lemma \ref{big lemma} and \eqref{ker rho'_a1},  \eqref{langle ranglex} one computes 
\begin{gather} \label{langle alpha,....} \langle \alpha, \alpha' \rangle = \sum_{j \in U} \dim(\ker T_j) + \sum_{a\in S} \dim( \ker(\rho'_a)) - (\# (U)+\# (S)-1 )\  \alpha'_s   
.\end{gather}
By Lemma \ref{big lemma} (a) the linear map $\bd \ker(T_j) & \rTo^{\kappa_j} & \Hom(k,k^{\alpha_s})\ed$ defined by projecting to the $\Hom(k,k^{\alpha_s})$-component is injective for $j\in U $. From \eqref{F_rho...1} it follows that   the linear map $\bd \ker(F_{\rho, \rho'}) & \rTo^{\kappa} & \Hom(k,k^{\alpha_s})\ed$ defined by projecting to the $\Hom(k,k^{\alpha_s})$-component is injective as well and  
\begin{gather} \label{dim(ker(F_rho,...))1} \dim(\ker(F_{\rho, \rho'}))= \dim(\kappa(\ker(F_{\rho, \rho'})))=\dim\left (\ \ \bigcap_{j\in U}\kappa_j(\ker T_j) \cap \bigcap_{a\in S} \ker (\rho'_a)\ \ \right ). \end{gather}
The obtained formulas hold for any $\wt{\rho'}\in Rep(\alpha')$ s. t. $\wt{\rho'}_a$ is of maximal rank for $a\in Arr(Q)$ (we denote the corresponding linear maps by $\wt{T}_j$). Due to \eqref{action of g}  and having that $\wt{T}_j$ is  $F_{\rho^j, \wt{\rho'}^j}^{L_j}$ or $G_{\rho^j, \wt{\rho'}^j,\wt{x}_j}^{L_j}$ or  $H_{\rho^j, \wt{\rho'}^j,{\rm Id}_k}^{L_j}$ we can move transitively  $ \kappa_j(\ker(\wt{T_j}))$ and $ \ker (\wt{\rho'}_a)$ inside $\Hom(k,k^{\alpha'_s}) \cong k^{\alpha'_s}$  by varying $\wt{\rho'}\in Rep(\alpha')$ (for $ \kappa_j(\ker(\wt{T_j}))$ this can be done by applying the action \eqref{left action} and the formulas \eqref{action of g}\footnote{one shows that changing $\rho'^j$ to $g'\cdot \rho'^j$ amounts to changing of $\kappa_j(\ker T_j)$ to $g'_s(\kappa_j(\ker T_j))$, where $g'\in \prod_{i\in V(L_j)} GL(\alpha_i,k)$}). Therefore, if $\langle \alpha, \alpha' \rangle\geq 0$, using \eqref{langle alpha,....} and  Lemma \ref{lemma with dimensions} (a),  we can ensure 
\be \label{help 1}  \dim\left (\ \ \bigcap_{j\in U}\kappa_j(\ker \wt{T}_j) \cap \bigcap_{a\in S} \ker (\wt{\rho'}_a)\ \ \right ) = \langle \alpha, \alpha' \rangle. \ee Therefore $\dim(\ker(F_{\rho, \wt{\rho'}}))=\langle \ul{\dim}(\rho), \ul{\dim}(\wt{\rho'}) \rangle$, which implies that $F_{\rho, \wt{\rho'}}$ is surjective(see Lemma \ref{lemma about F_rhi,rhi'} (b)). Now Corollary \ref{main coro} shows that $F_{\rho,\rho'}$ has maximal rank.
If $\langle \alpha ,\alpha' \rangle < 0$, then, due to Lemma \ref{lemma with dimensions} (b) and \eqref{langle alpha,....}, we can vary $\wt{\rho'}$  so that 
the lefthandside in \eqref{help 1} equals $ \{0\} $. Hence \eqref{dim(ker(F_rho,...))1}  implies that $F_{\rho,\widetilde{\rho'}}$ is injective. Now Corollary \ref{main coro} shows that $F_{\rho,\rho'}$ has maximal rank.

So far we proved that $F^Q_{\rho,\rho'}$ has a maximal rank. The quiver $Q^{\vee}$ and the representations $\rho^{\vee}$, $\rho'^{\vee}$ satisfy the same conditions as  $Q$, $\rho$, $\rho'$, respectively. Therefore $F^{Q^{\vee}}_{\rho^{\vee},\rho'^{\vee}}$ has maximal rank. Now Lemma \ref{lemma about F_rhi,rhi'} (e) shows that $F^Q_{\rho',\rho}$ has maximal rank. The lemma is proved.
\epr

 Combining Lemmas  \ref{second comp hill}  and \ref{lemma with units} we obtain:
\begin{prop} \label{main coro sect 5} Let $Q$ be a star shaped quiver. For any two exceptional representations $\rho, \rho' \in Rep_k(Q)$, whose dimension vectors are hill or thin and they are not both hill, the linear map $F_{\rho,\rho'}$ has maximal rank.  In particular, for any two such $\rho, \rho'$ we have $\hom(\rho,\rho')=0$ or $\hom^1(\rho,\rho')=0$. 
\end{prop}

\begin{coro} \label{tree arms} In any star quiver with three arms  $Q$  for any two exceptional representations $\rho, \rho'$ we  have  ${\rm Hom}(\rho,\rho')=0$ or ${\rm Ext}^1(\rho,\rho')=0$ provided that $\rho$ or $\rho'$ is a thin representation. 
\end{coro}
\bpr  From \cite[Corollary 1.4]{Ringelv3} it follows that   any exceptional representation in $Rep_k(Q)$ is either thin or has hill dimension vector. Therefore the corollary follows from Proposition \ref{main coro sect 5}. 
\epr

\section{Some  directions of future research}  \label{directions for future}
Motivated by   Corollaries \ref{noENCdynkin}, \ref{Dynkin are regularity preserv} and the results in \cite{DK} we conjecture 

\begin{conj} \label{conj1} For each Dynkin quiver $Q$ and each $\sigma \in \st(D^b(Q))$ there exists a full $\sigma$-exceptional collection.   
\end{conj}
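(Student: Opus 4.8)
The plan is to induct on the number of vertices $n=|V(Q)|$, generalising Theorem \ref{main theorem for Q_1 in intro} from $Q_1$ to an arbitrary Dynkin quiver and exploiting the two simplifications established here: by Corollary \ref{Dynkin are regularity preserv} the category $Rep_k(Q)$ is regularity preserving and carries no $\sigma$-irregular objects. The base cases are classical: for $n=1$ we have $D^b(Q)=D^b(k)$ and the unique exceptional object generates every $\sigma$, while $n=2$ is the quiver $A_2=K(1)$, covered by Macr\`i's description of $\st(D^b(K(l)))$ in \cite{Macri}.

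For the inductive step fix $\sigma\in\st(D^b(Rep_k(Q)))$. Since $Q$ is Dynkin its root system has no imaginary roots, so every indecomposable of $D^b(Rep_k(Q))$ is exceptional; in particular the Harder--Narasimhan factor of maximal phase of any object splits into exceptional indecomposables of a common phase, and we may choose one $\sigma$-semistable exceptional object $S$ of maximal phase. The first idea is to peel off $S$ using its right orthogonal $S^{\perp}$: for an exceptional object in a hereditary category $S^{\perp}$ is again the bounded derived category of the representations of a quiver (a standard property of perpendicular categories), whose graph is a disjoint union of Dynkin diagrams on $n-1$ vertices, and $\sigma$ restricts to a stability condition $\sigma|_{S^{\perp}}$ there. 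The subtlety is that making $S$ the last element of an exceptional collection forces not only the $\Hom$-vanishings, which the maximality of the phase of $S$ supplies, but also the $\Hom^1$-vanishings $\hom^1(S,-)=0$ on $S^{\perp}$, and this is not automatic from the phase alone.

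It is exactly here that the machinery recalled in the introduction is deployed. To any non-semistable exceptional object the algorithm $\mk{alg}$ of \cite{DK} attaches a relation to an exceptional pair $(S',E)$ with $\sigma$-semistable first component, and, because $Rep_k(Q)$ has no Ext-nontrivial couples and no $\sigma$-irregular objects, the induced sequence \eqref{sequence of cases in intro} runs without obstruction and terminates in a chain consisting entirely of $\sigma$-semistable exceptional objects. I would use this to replace the non-semistable members of a full exceptional collection by semistable ones and to organise their phases, carrying the inductive hypothesis in the stronger form that the $\sigma|_{S^{\perp}}$-exceptional collection produced on $S^{\perp}$ realises a prescribed interval of phases; appending $S$ should then yield a full exceptional collection $(E_1,\dots,E_{n-1},S)$ of $\sigma$-semistable objects in $D^b(Rep_k(Q))$.

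The decisive step, which I expect to be the main obstacle, is to verify that this collection actually \emph{generates} the given $\sigma$ in the sense of \cite[Definition 3.19]{DK}, rather than being merely some full exceptional collection of semistable objects. Concretely one must check that the central charges and phases of $E_1,\dots,E_{n-1},S$ sit in the configuration required for the associated Ext-exceptional t-structure to recover $\sigma$ after the $\widetilde{GL}^+(2,\RR)$-action, which is precisely the $\Hom^1$-compatibility flagged above. For $Q_1$ this was settled in \cite{DK} by an explicit case analysis of the finitely many phase configurations, read off from the tables of $\hom(\rho,\rho')$ and $\hom^1(\rho,\rho')$ computed in \cite[Subsection 2.2]{DK}; for a general Dynkin quiver that finite check must be upgraded to a uniform argument, the natural route being an induction that tracks the whole admissible phase interval of $\sigma|_{S^{\perp}}$ and shows it is preserved under each peeling step.
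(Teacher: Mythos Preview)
The statement you are attempting to prove is \emph{Conjecture}~\ref{conj1}; the paper does not prove it and lists it in Section~\ref{directions for future} as a direction for future research. There is therefore no ``paper's own proof'' to compare against: the results actually established here are Corollaries~\ref{coro1}, \ref{noENCdynkin}, \ref{Dynkin are regularity preserv}, which supply the regularity-preserving input you invoke, and the paper explicitly records the general Dynkin statement as open.

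Regarding the proposal itself, it is a plausible outline but contains genuine gaps. First, the assertion that ``$\sigma$ restricts to a stability condition $\sigma|_{S^{\perp}}$'' is not automatic: stability conditions do not in general restrict along semiorthogonal decompositions, and one needs an argument (support property, compatibility of the slicing with the inclusion) that you have not supplied. Second, ``a $\sigma$-semistable exceptional object $S$ of maximal phase'' is ill-posed in $D^b(Rep_k(Q))$, since shifting increases phase without bound; you presumably mean maximal phase among indecomposables in a fixed heart, but then the required $\hom^1$-vanishings against $S^{\perp}$ are precisely what is in question, as you yourself flag. Third, the final paragraph is an honest acknowledgement that the decisive compatibility with \cite[Definition~3.19]{DK} is not proved but only hoped for; replacing the explicit finite case analysis of \cite{DK} for $Q_1$ by a ``uniform argument'' tracking admissible phase intervals is exactly the missing content of the conjecture. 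In short, your sketch identifies the right ingredients (absence of Ext-nontrivial couples, regularity-preserving, the sequences~\eqref{sequence of cases in intro}) but does not close the gap that keeps Conjecture~\ref{conj1} a conjecture.
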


When there are  Ext-nontrivial couples, RP property 1 and  RP property 2 are   our method to prove  regularity-preserving.
 The fact that they hold not only in $Rep_k(Q_1)$, but also in $Rep_k(Q_2)$ (Corollary \ref{RP property 1,2 and.. for Q1}) seems to be a trace of   a larger  unexplored  picture.  We expect  that there are further non-trivial examples of  regularity-preserving  categories. For example,  we conjecture
\begin{conj} For each quiver $Q$, whose underlying graph is a  loop, there is a choice of the orientation of the arrows such that the  category $Rep_k(Q)$ is regularity preserving. 
\end{conj}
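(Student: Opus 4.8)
The plan is to mirror the strategy that worked in the Dynkin case, replacing the positive-\emph{definiteness} of the Euler form by the positive-\emph{semidefiniteness} available in the affine type $\wt{\mathbb A}$. A loop on $n$ vertices carries, besides its two cyclic orientations, many acyclic ones, and the underlying graph is then the affine diagram $\wt{\mathbb A}_{n-1}$. First I would fix the orientation with a \textbf{unique source $s$ and a unique sink $t$}, so that $Q$ is the union of two directed paths from $s$ to $t$ of lengths $p$ and $q$ with $p+q=n$. For $n=4$, $p=q=2$ this is exactly $Q_2$, already known to be regularity preserving, whereas the orientation in \eqref{non maximal rank}, for which $F_{\rho,\rho'}$ fails to have maximal rank and which by Lemma \ref{not equivalent} gives a non-equivalent category, must be avoided. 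This is what makes ``there is a choice of orientation'' essential rather than cosmetic.

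The crux is to establish for this orientation the analogue of Corollary \ref{coro1}: for any two exceptional representations $\rho,\rho'$ the map $F_{\rho,\rho'}$ has maximal rank, equivalently $\hom(\rho,\rho')=0$ or $\hom^1(\rho,\rho')=0$. The machinery of Sections \ref{the linear map}--\ref{star shaped} assumes the underlying graph is a tree, and this is precisely where it breaks: solving $f_{t(a)}\circ\rho_a-\rho'_a\circ f_{s(a)}=y_a$ along a cycle is obstructed by a consistency condition upon returning to the starting vertex. The natural workaround is to cut the loop at $s$ and $t$ into its two paths $L_1,L_2$, apply Lemma \ref{big lemma} to each path separately, and then control how the two kernels are glued at the two shared endpoints. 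This is structurally parallel to the star-shaped argument (Lemma \ref{second comp hill}), where rays were glued at the single center $s$; the genuinely new feature is that there are \emph{two} gluing vertices $s$ and $t$, so the intersection-of-subspaces bookkeeping of Lemma \ref{lemma with dimensions} must be run simultaneously at both. Alternatively, as was done for $Q_2$ in \cite[Section 2]{DK}, one can first produce an explicit list of the exceptional representations (generalising Proposition \ref{exceptional objects in Q2}) and read off the Hom and Ext dimensions directly, or invoke the tame representation theory of $\wt{\mathbb A}_{n-1}$, treating the directing (preprojective/preinjective) exceptionals and the exceptionals in the two non-homogeneous tubes of ranks $p$ and $q$ separately.

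Granting the maximal-rank property, Lemma \ref{lemma for leq 0 and ENC} applies and shows that every Ext-nontrivial couple $\{\rho,\rho'\}$ satisfies $\langle\ul{\dim}(\rho)+\ul{\dim}(\rho'),\ul{\dim}(\rho)+\ul{\dim}(\rho')\rangle\le 0$. Since the symmetrised Euler form of $\wt{\mathbb A}_{n-1}$ is positive semidefinite with radical $\ZZ\delta$, where $\delta=(1,\dots,1)$ is the minimal imaginary root, this inequality forces $\ul{\dim}(\rho)+\ul{\dim}(\rho')=m\delta$ for some $m\ge 1$, i.e.\ $\alpha_i+\alpha'_i=m$ at every vertex. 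The Ext-nontrivial couples are thereby severely constrained. I expect $m=1$, so that $\rho,\rho'$ are thin with complementary supports partitioning $V(Q)$, exactly as for the couples $\{F_+,G_-\},\{F_-,G_+\}$ of $Q_2$, where indeed $\ul{\dim}(F_+)+\ul{\dim}(G_-)=\delta$; any finitely many surviving cases with $m\ge 2$ would be listed and checked by hand.

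Finally I would verify RP Property 1 and RP Property 2 (Definition \ref{def of ENC}) for this explicit family of couples, reproducing for general $n$ the verification made for $Q_2$ in Corollary \ref{RP property 1,2 and.. for Q1}(a); once both hold, \cite[Proposition 6.6]{DK} gives that $Rep_k(Q)$ is regularity preserving. The main obstacle is the second step: pushing the maximal-rank result past the tree hypothesis, where the double gluing at $s$ and $t$ forces a new argument beyond Lemma \ref{big lemma}. Everything downstream---the radical constraint and the RP verification---is then formally parallel to the Dynkin and $Q_2$ analyses already carried out, so the whole difficulty is concentrated in handling the single cycle.
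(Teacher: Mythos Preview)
This statement is listed in the paper as a \emph{conjecture} (in Section~\ref{directions for future}, ``Some directions of future research''), and the paper provides no proof of it. There is therefore nothing in the paper to compare your proposal against: the authors state the conjecture, motivated by the $Q_2$ case (Corollary~\ref{RP property 1,2 and.. for Q1}) and the Dynkin results, and leave it open.

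Your write-up is not a proof and you are candid about this: the decisive step---that $F_{\rho,\rho'}$ has maximal rank for any two exceptional $\rho,\rho'$ in the chosen single-source/single-sink orientation---is asserted as a goal (``Granting the maximal-rank property\dots''), not established. The downstream reductions you sketch (Lemma~\ref{lemma for leq 0 and ENC} forcing $\ul{\dim}(\rho)+\ul{\dim}(\rho')\in\ZZ\delta$, then a case check of RP Properties 1 and 2) are the natural analogue of what the paper does for Dynkin quivers and for $Q_2$, and your observation that the $Q_2$ couples $\{F_\pm,G_\mp\}$ indeed satisfy $\ul{\dim}(F_\pm)+\ul{\dim}(G_\mp)=\delta$ is a good sanity check. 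But none of this can be called a proof until the maximal-rank step is actually carried out; you correctly identify that Sections~\ref{without loops}--\ref{star shaped} rely on the graph being a tree, and that gluing two paths at \emph{two} vertices rather than one is a genuinely new difficulty not covered by Lemma~\ref{big lemma} or Lemma~\ref{lemma with dimensions}. As written, this is a reasonable research plan toward the conjecture, not a proof of it.
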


 We showed in \cite{DK}  that $Rep_k(Q_2)$ is regularity-preserving, but did not answer the  question: is there  a $\sigma$-exceptional quadruple for each $\sigma \in \st(D^b(Q_2))$ (the $Q_2$-analogue of  Theorem \ref{main theorem for Q_1 in intro}).   The results of \cite[ Sections 7,8, Subsection 9.1]{DK}   hold for  $Rep_k(Q_2)$.  These  are clues for a positive answer. In section \cite[Section 2]{DK} are  given  the dimensions of $\Hom(X,Y)$, ${\rm Ext}^1(X,Y)$ for any two exceptional objects $X,Y$ in $Q_2$ as well. This lays  a  ground for working on the  $Q_2$-analogue of  Theorem \ref{main theorem for Q_1 in intro}.

\begin{quest} In \cite{DK} and in the present paper, the notion of regularity-preserving category was defined and applied to categories of homological dimension 1. To define and study a relevant notion  for higher dimensional categories  is another direction of future research.
\end{quest}

\end{document}